\theoremstyle{plain}
\newtheorem{thm}{Theorem}[section]
\newtheorem{prop}{Proposition}[section]
\newtheorem{lem}{Lemma}[section]
\newtheorem{cor}{Corollary}[section]
\theoremstyle{definition}
\newtheorem{df}{Definition}[section]
\newtheorem{rem}{Remark}[section]
\newtheorem{ex}{Example}[section]
\newtheorem{conj}{Conjecture}[section]
\newtheorem{prob}{Problem}[section]
\newcommand{\FF}{\mathbb{F}}
\newcommand{\RR}{\mathbb{R}}
\newcommand{\I}{\mathcal{I}}
\newcommand{\Z}{\mathbb{Z}}
\newcommand{\R}{\mathbb{R}}
\DeclareMathOperator{\Harm}{Harm}
\DeclareMathOperator{\Hom}{Hom}
\begin{document}

\title[Weighted T--G polynomials]{Weighted Tutte--Grothendieck polynomials of graphs}

\author[Chakraborty]{Himadri Shekhar Chakraborty*}
\address
	{
		Department of Mathematics, 
		Shahjalal University of Science and Technology\\ 
		Sylhet-3114, Bangladesh\\
	}
\email{himadri-mat@sust.edu}

\author[Miezaki]{Tsuyoshi Miezaki}
\address
	{
		Faculty of Science and Engineering, 
		Waseda University, 
		Tokyo 169-8555, Japan\\
	}
\email{miezaki@waseda.jp} 

\author[Zheng]{Chong Zheng}
\address
{
	Faculty of Science and Engineering, 
	Waseda University, 
	Tokyo 169-8555, Japan\\
}
\email{engzhengchong@ruri.waseda.jp}


\thanks{*Corresponding author}

\date{}
\maketitle

\begin{abstract}
In this paper, 
we introduce the notion of weighted chromatic polynomials of a graph
associated to a weight function~$f$ of a certain degree, and 
discuss some of its properties.
As a generalization of this concept, 
we define the
weighted Tutte--Grothendieck polynomials of graphs. 
When $f$ is harmonic, we notice that there is a 
correspondence between the weighted Tutte--Grothendieck polynomials of graphs 
and the weighted Tutte polynomials of matroids. 
Moreover, we present some constructions of the 
weighted Tutte--Grothendieck invariants for graphs as well as 
the weighted Tutte invariants for matroids.
Finally, we give a remark on the categorification of the 
weighted chromatic polynomials of graphs and weighted 
Tutte polynomials of matroids.
\end{abstract}

{\small
\noindent
{\bfseries Key Words:}
Tutte polynomials, chromatic polynomials, matroids, graphs, discrete harmonic functions.\\ 

\vspace{-0.15in}

\noindent
2010 {\it Mathematics Subject Classification}. 
Primary 05B35;
Secondary 94B05, 11T71.\\ \quad
}


\section{Introduction}

Motivated by the concept of the harmonic weight enumerator of a binary code
introduced by Bachoc~\cite{Bachoc} and the relation between the weight 
enumerator of a code and the Tutte polynomial of a matroid pointed out by
Greene~\cite{Greene1976}, Chakraborty, Miezaki, and Oura~\cite{CMOxx}
gave the notion of the harmonic Tutte polynomial of 
a matroid, and presented the Greene type 
relation between the harmonic weight enumerator
of a code and the harmonic Tutte polynomial of a matroid.
Tutte polynomials have been studied by many authors. 
We refer the readers to~\cite{Tutte1947,Tutte1959}
for more discussions on Tutte polynomials of a matroid.
In the study of graph theory, the chromatic polynomial of a graph that
counts the number of all proper colouring of a graph with a given number
of colours is an invariant for graphs (see~\cite{Tutte1954,Tutte1967}). 

In this paper,
we introduce some new polynomials associate to a graph and a weight function~$f$ of 
a certain degree, specifically, the weighted chromatic polynomial 
and the weighted Tutte--Grothendieck polynomial.
We show that a weighted chromatic polynomial is a particular case
of a weighted Tutte--Grothendieck polynomial.
Moreover, we present a Greene type relation between the weighted 
Tutte--Grothendieck polynomials of graphs and 
the weighted Tutte polynomials of matroids, 
provided the weight function~$f$ is harmonic.
Furthermore, we present the concept of weighted Tutte--Grothendieck 
invariants of graphs as well as  weighted Tutte invariants of matroids,
and give a connection between them. 
Finally, we give a remark on the categorification of the weighted chromatic polynomials
of graphs and weighted Tutte polynomials of matroids.

This paper is organized as follows.
In Section~\ref{Sec:Preli}, we give the basic definitions and 
notations from graphs and matroids. We also give a brief discussion
about (discrete) harmonic functions.
In Section~\ref{Sec:WeightedChrom}, 
we define the weighted chromatic polynomials
of graphs and obtain a recurrence formula for it (Proposition~\ref{Prop:harmchrominduc}).
In Section~\ref{Sec:WeightedTutte},
we give a recursive definition of weighted Tutte polynomials (Proposition~\ref{Prop:TutteInduc}).
Moreover, if the weight function~$f$ is harmonic, 
we establish a relation between the weighted chromatic polynomials of graphs 
and the weighted Tutte polynomials of matroids (Theorem~\ref{Thm:RelationChromTutte}).
In Section~\ref{Sec:HarmTG},
we define the weighted Tutte--Grothendieck polynomial of a graph, and 
give a generalization of Theorem~\ref{Thm:RelationChromTutte} for this case (Theorem~\ref{Thm:HarmRecipe}).
In Section~\ref{Sec:WeightedInv},
we present the weighted Tutte--Grothendieck invariants for graphs (Theorem~\ref{Thm:PhiInv}) and the weighted Tutte invariants for matroids (Theorem~\ref{Thm:WeightedTutteInv}) and a correspondence between them (Theorem~\ref{Thm:GenHarmRecipe}). 
In Section 7,
we give categorifications to weighted chromatic polynomials (Theorem~\ref{Thm:ChromaticCate})
and weighted Tutte polynomials (Theorem~\ref{Thm:TutteCate}),
respectively.

All computer calculations in this study were performed with the aid of
Mathematica~\cite{Mathematica}.

\section{Preliminaries}\label{Sec:Preli}

In this section,
we give a brief discussion on graphs and matroids including 
the basic definitions and notations. 
We review~\cite{Aigner2007,Cameron,Oxley1992} for this discussion. 
We also recall~\cite{Bachoc,Delsarte} for the definition and properties of 
the (discrete) homogenous functions and (discrete) harmonic functions. 

\subsection{Graphs}

Let $G := (V,E)$ be a graph, where
$V$ denotes the set of vertices, 
and $E$ is the set of edges.
An edge is usually incident with two vertices.
But if the edge incident 
with equal end vertices, the edge is called a \emph{loop}.
A graph is called \emph{simple}
if it has neither loops nor multiple edges.
A \emph{path} in $G$ is a sequence of edges 
$(e_1, e_2, \ldots, e_{m - 1})$ 
having a sequence of vertices 
$(v_1, v_2,\ldots, v_n)$ 
satisfying $e_i \mapsto \{v_i, v_{i + 1}\}$ 
for $i = 1, 2,\ldots, m - 1$.
A \emph{circuit} in $G$ 
is a sequence of edges 
$(e_1, e_2, \ldots, e_{m - 1})$ 
having a sequence of vertices 
$(v_1, v_2,\ldots, v_n)$ 
satisfying $e_i \mapsto \{v_i, v_{i + 1}\}$ 
for $i = 1, 2,\ldots, m - 1$ and
$v_{1} = v_{n} = v$.
A circuit that does not repeat vertices is called \emph{cycle}. 
The \emph{connected component} of $G$ is a connected subgraph
of $G$ which is not a proper subgraph of another connected subgraph of $G$.
A \emph{bridge} of $G$ is an edge whose removal increase the number of 
connected components of $G$ by $1$. 
Throughout this note, 
we assume that the graphs are finite and not necessary to be simple.

\subsection{Matroids}

Let $E$ be a finite set of cardinality~$n$ and 
$2^{E}$ denotes the set of all subsets of $E$. 
A (finite) \emph{matroid} 
$ M $ is an ordered pair $ (E, \I) $,
where $ \I $ is the collection of subsets of $ E $ 
satisfying the following conditions:  
\begin{itemize}
	\item[(M1)] 
	$ \emptyset \in \I $.
	\item[(M2)] 
	$ I \in \I $ 
	and 
	$ J \subset I $ 
	implies 
	$ J \in \I $.
	\item[(M3)] 
	$ I, J \in \I $ 
	with $ |I| < |J| $ 
	then there exists 
	$ j \in J\setminus I $ 
	such that 
	$ I \cup \{ j \} \in \I $.
\end{itemize}

The elements of $ \I $ are called the \emph{independent} 
sets of $ M $, 
and $ E $ is called the \emph{ground set} of $ M $. 
A subset of the ground set $ E $ 
that is not belongs to $ \I $ is called \emph{dependent}. 
For example, 
let $G = (V,E)$ be a finite graph. 
Let $\I$ be the set of all subsets $A$ of $E$ 
for which the subgraph $(V,A)$ contains no cycle. 
Then $G$ is a matroid (See \cite{Cameron,Tutte1959}). 
Such a matroid is called \emph{graphic matroid}, and is denoted by $M_{G}$.

An independent set $I$ is called a \emph{maximal independent set} 
if it becomes dependent on adding any element of $ E \setminus I $. 
It follows from the axiom (M3) that the cardinality of all the 
maximal independent sets in a
matroid $M$ is equal, called the \emph{rank} of $M$. 
These maximal independent sets are called the \emph{bases} of $M$. 
The \emph{rank} $\rho(J)$ of an arbitrary subset $J$ of $E$ 
is the size of the largest independent subset of~$J$.
That is, 
$
\rho(J) 
:= 
\max_{I \subset J} 
\{ 
|I| : 
I \in \I 
\}.
$
This implies 
$ \rho $ 
maps 
$ 2^{E} $ 
into 
$\mathbb{Z}$. 
This function $ \rho $ is called the \emph{rank function} of $ M $. 
In particular, 
$ \rho(\emptyset) = 0 $.
We shall denote 
$\rho(E)$ the rank of~$M$.
We refer the readers to \cite{Oxley1992} for detailed discussion.

\subsection{Discrete homogeneous and harmonic functions}

Let~$\Omega = \{1,2,\ldots,n\}$ be a finite set.
We define  
$\Omega_{d} := \{ X \in 2^{\Omega} : |X| = d\}$
for $d = 0,1, \ldots, n$. 
We denote by 
$\R 2^{\Omega}$, $\R \Omega_{d}$
the real vector spaces spanned by the elements of  
$2^{\Omega}$, $\Omega_{d}$,
respectively. 
An element of 
$\R \Omega_{d}$
is denoted by
\begin{equation}\label{Equ:FunREd}
	f :=
	\sum_{Z \in \Omega_{d}}
	f(Z) Z
\end{equation}
and is identified with the real-valued function on 
$\Omega_{d}$
given by 
$Z \mapsto f(Z)$. 
Such an element 
$f \in \R \Omega_{d}$
can be extended to an element 
$\widetilde{f}\in \R 2^{\Omega}$
by setting, for all 
$X \in 2^{\Omega}$,
\begin{equation}\label{Equ:TildeF}
	\widetilde{f}(X)
	:=
	\sum_{Z\in \Omega_{d}, Z\subset X}
	f(Z).
\end{equation}
If an element 
$g \in \R 2^{\Omega}$
is equal to some 
$\widetilde{f}$, 
for 
$f \in \R \Omega_{d}$, 
we say that $g$ has degree $d$. 
We call the vector space $\RR\Omega_{d}$ 
the \emph{homogeneous space} of degree~$d$ and denote by $\Hom_{d}(n)$.
The differentiation $\gamma$ is the operator defined by the linear form 
\begin{equation}\label{Equ:Gamma}
	\gamma(Z) 
	:= 
	\sum_{Y\in {\Omega}_{d-1}, Y\subset Z} 
	Y
\end{equation}
for all 
$Z \in \Omega_{d}$
and for all $d=0,1, \ldots n$, 
and $\Harm_{d}(n)$ is the kernel of $\gamma$:
\[
	\Harm_{d}(n) 
	:= 
	\ker
	\left(
	\gamma\big|_{\RR \Omega_{d}}
	\right).
\]

\begin{rem}[\cite{Bachoc,Delsarte}]\label{Rem:Gamma}
	Let $f \in \Harm_{d}(n)$. 
	Then $\gamma^{d-i}(f) = 0$ for all $0 \leq i \leq d-1$.
	That is, from~(\ref{Equ:Gamma})
	$\sum_{\substack{Z \in \Omega_{d},\\ X \subset Z}} f(Z) = 0$
	for any $X \in \Omega_{i}$.
\end{rem}

Now we have the following technical lemma.

\begin{lem}[\cite{Bachoc}]\label{Lem:Bachoc}
	Let $f \in \Harm_{d}(n)$ and $J \subset \Omega$.
	Let
	\[
	f^{(i)}(J)
	:=
	\sum_{\substack{Z \in \Omega_{d}\\ |J \cap Z| = i}}
	f(Z).
	\]
	Then for all 
	$0 \leq i \leq d$,
	$f^{(i)}(J) = (-1)^{d-i} \binom{d}{i} \widetilde{f}(J)$.
\end{lem}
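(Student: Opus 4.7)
My plan is to prove the claim by downward induction on the index~$i$, starting from~$i = d$; each inductive step will be powered by one scalar linear relation extracted from the harmonicity of~$f$.

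The base case $i = d$ is immediate: the condition $|Z \cap J| = d$ for $Z \in \Omega_d$ forces $Z \subset J$, so by the definition~\eqref{Equ:TildeF} of $\widetilde{f}$ we have $f^{(d)}(J) = \widetilde{f}(J) = (-1)^{0}\binom{d}{d}\widetilde{f}(J)$, as required.

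For the inductive step, fix $i < d$ and assume the formula is known for every $j \in \{i+1,\ldots,d\}$. By Remark~\ref{Rem:Gamma}, $\sum_{Z \in \Omega_d,\, X \subset Z} f(Z) = 0$ for each $X \in \Omega_i$. I would sum this identity over all $X \in \Omega_i$ with $X \subset J$ and switch the order of summation; each $Z \in \Omega_d$ then appears with multiplicity equal to the number of $i$-subsets of $J$ contained in $Z$, namely $\binom{|Z \cap J|}{i}$. Partitioning the resulting equation by the value $j := |Z \cap J|$ yields the linear recursion
\[
f^{(i)}(J) \;=\; -\sum_{j=i+1}^{d}\binom{j}{i}\, f^{(j)}(J).
\]

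Substituting the induction hypothesis $f^{(j)}(J) = (-1)^{d-j}\binom{d}{j}\widetilde{f}(J)$ and applying the standard subset-of-subset identity $\binom{j}{i}\binom{d}{j} = \binom{d}{i}\binom{d-i}{j-i}$ factors out $\binom{d}{i}\widetilde{f}(J)$, reducing the problem to evaluating the truncated alternating sum $\sum_{k=1}^{d-i}(-1)^{d-i-k}\binom{d-i}{k}$ with $k = j-i$. Since the full alternating sum $\sum_{k=0}^{d-i}(-1)^{d-i-k}\binom{d-i}{k}$ vanishes (as $(1-1)^{d-i} = 0$ for $i < d$), the truncated version equals $-(-1)^{d-i}$, and combined with the leading minus sign this gives the desired coefficient $(-1)^{d-i}\binom{d}{i}$. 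The only real obstacle I anticipate is the sign bookkeeping in this final binomial manipulation; the degenerate regime $|J| < d$ causes no difficulty, since then $\widetilde{f}(J) = 0$ and $f^{(i)}(J) = 0$ automatically whenever $i > |J|$, while the recursion above still closes off the remaining indices.
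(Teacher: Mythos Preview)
Your proof is correct. The paper itself does not prove this lemma; it is quoted from Bachoc~\cite{Bachoc} and used as a black box, so there is no in-paper argument to compare against. Your downward induction via the harmonicity relation of Remark~\ref{Rem:Gamma} is a clean, self-contained proof: summing the identity $\sum_{Z \supset X} f(Z) = 0$ over all $i$-subsets $X$ of $J$ and switching the order of summation gives precisely the recursion $\sum_{j \ge i}\binom{j}{i} f^{(j)}(J) = 0$, and then the subset-of-subset identity $\binom{j}{i}\binom{d}{j} = \binom{d}{i}\binom{d-i}{j-i}$ together with $(1-1)^{d-i} = 0$ closes the induction with the correct sign. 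Your remark about the degenerate regime $|J| < d$ is accurate but not strictly necessary, since the general argument already degenerates to $0 = 0$ there.
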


\begin{rem}\label{Rem:BachocLem}
	From the definition of $\widetilde{f}$ 
	for $f \in \Harm_{d}(n)$,
	we have $\widetilde{f}(J) = 0$
	for any $J \in 2^{\Omega}$ such that $|J| < d$. 
	Let $I, J \in 2^{\Omega}$ such that $I = \Omega\setminus J$.
	Then
	\begin{align*}
		\widetilde{f}(J)
		=
		\sum_{\substack{Z \in \Omega_{d}\\Z \subset J}}
		f(Z)
		=
		\sum_{\substack{Z \in \Omega_{d}\\ |Z \cap I|=0}}
		f(Z)
		=
		f^{(0)}(I)
		=
		(-1)^{d} \widetilde{f}(\Omega\setminus J).		
	\end{align*}	
	We have from the above equality that if $|J| > n-d$, 
	then $\widetilde{f}(J) = 0$. 
\end{rem}

For $X, Y \in 2^{\Omega}$, 
we introduce an operator $\circ$ on $\RR 2^{\Omega}$ as
\[
	\widetilde{f}(X) 
	\circ 
	\widetilde{f}(Y)
	:=
	\widetilde{f}(X \cup Y),
\]
which is associative, and distributive with respect to addition. 
Then we have the following remark.

\begin{rem}\label{Rem:Himadri}
	Let $I \subset \Omega$ and $J \subset \Omega\setminus I =: I^{c}$. 
	Then for $f \in \Harm_{d}(n)$ and by Remark~\ref{Rem:BachocLem}, 
	it is clear that
	\begin{itemize}
		\item [(i)]
		$\widetilde{f}(I)
		\circ
		\widetilde{f}(I^{c} \setminus J)
		=
		\widetilde{f}(I)
		\circ
		\widetilde{f}(\Omega \setminus (J \cup I))
		=
		(-1)^{d} 
		\widetilde{f}(J)$,
		
		\item [(ii)]
		$\widetilde{f}(I^{c} \setminus J)
		=
		\widetilde{f}(\Omega \setminus (J \cup I))
		=
		(-1)^{d} 
		\widetilde{f}(I)
		\circ
		\widetilde{f}(J)$.
	\end{itemize}
\end{rem}

\section{Weighted chromatic polynomials}\label{Sec:WeightedChrom}

In this section,
we discuss the weighted chromatic polynomial of a graph 
which is sometimes called the homogeneous chromatic polynomial of a graph.
For the definitions and notations of the classical chromatic polynomials of graphs
we refer the readers to~\cite{Aigner2007,Cameron}.

Let $G = (V,E)$ be a graph.
We assume that the number of edges is~$|E|=n$. 
Let $s$ be a bijective map from the edge set $E$ to $\Omega$, and 
$G(s)$ be a graph, where the edges are indexed by $s$.
We call $G(s)$ the \emph{labelled graph} and 
$s$ the \emph{label} of the graph~$G$.
For $A \subset E$, we denote by
$N_{A}(\lambda)$ the number of $\lambda$-colouring such that
the vertices adjacent to $A$ have the same colour.
Let $f$ be a homogeneous function of degree~$d$.
Then the \emph{weighted chromatic polynomial} of $G$ 
associated with $f$ and $s$ is defined as:
\[
	\chi_{f}(G(s);\lambda)
	:=
	\sum_{A\subset E}
	\widetilde{f}(s(E\setminus A))
	(-1)^{|A|}
	N_{A}(\lambda).
\]
Let $G_{A} := (V,A)$ be the subgraph of $G$ identified by $A$.
We denote by~$k(G_{A})$ the number of connected components of $G_{A}$.
Then $N_{A}(\lambda) = \lambda^{k(G_{A})}$. 
Therefore the weighted chromatic polynomial $\chi_{f}(G(s);\lambda)$
can be written as:
\[
	\chi_{f}(G(s);\lambda)
	=
	\sum_{A\subset E}
	\widetilde{f}(s(E\setminus A))
	(-1)^{|A|}
	\lambda^{k(G_{A})}.
\]

In the same manner as above, we can define the weighted chromatic polynomial of a graph 
related to any map $s$ instead of a bijective map. We will discuss the properties 
of such weighted chromatic polynomials in some subsequent papers~\cite{WTGII}. 
In this paper, 
we only consider $s$ as bijective.

If $f$ is a harmonic function of degree~$d$, 
we call $\chi_{f}(G(s);\lambda)$ the \emph{harmonic chromatic polynomial}
of $G$ associated with~$f$ and $s$. When $f = 1$, we get the classical chromatic
polynomial $\chi(G;\lambda)$ which is independent of the choice of $s$.

\begin{rem}\label{Rem:harmchromempty}
	For a graph $G$ with no edge and $m$ vertices, we have
	\[
		\chi_{f}(G(s);\lambda)
		=
		\widetilde{f}(\emptyset)
		\lambda^{m},
	\] 
	where $f \in \Hom_{d}(n)$ and $s$ is any label of $G$.
\end{rem}

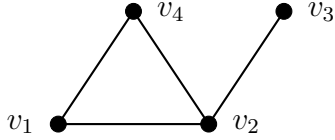
\begin{figure}[h]
	\begin{tikzpicture}
		\draw[fill=black] (0,0) circle (3pt);
		\draw[fill=black] (2,0) circle (3pt);
		\draw[fill=black] (3,1.5) circle (3pt);
		\draw[fill=black] (1,1.5) circle (3pt);
		\node at (-0.5,0) {$v_{1}$};
		\node at (2.5,0) {$v_{2}$};
		\node at (3.5,1.5) {$v_{3}$};
		\node at (1.5,1.5) {$v_{4}$};
		\draw[thick] (0,0) -- (2,0) -- (3,1.5);
		\draw[thick] (0,0)-- (1,1.5);
		\draw[thick] (2,0)-- (1,1.5);
	\end{tikzpicture}
	\caption{A graph with $4$ vertices and $4$ edges}
	\label{Fig:G44}
\end{figure}

\begin{ex}\label{Ex:WeightedChrom}
	Let the graph $G$ given in Figure~\ref{Fig:G44}.
	Let $\Hom_{1}(4) \ni f = a_{1} \{1\} + a_{2} \{2\} + a_{3} \{3\} + a_{4} \{4\}$.
	Let us consider the following label~$s$ for $G$:
	$\{v_{1},v_{2}\} \mapsto 4$, $\{v_{2},v_{3}\} \mapsto 1$, 
	$\{v_{2},v_{4}\} \mapsto 2$, $\{v_{1},v_{4}\} \mapsto 3$. 
	Then we have
	\begin{align*}
		\chi_{f}(G(s);\lambda)
		& =
		(a_{1}+a_{2}+a_{3}+a_{4}) 
		(\lambda^{4}
		-
		3 \lambda^{3}
		+
		3 \lambda^{2})
		-
		(a_{1}\lambda+a_{2}+a_{3}+a_{4})\lambda.
	\end{align*}
	For $f \in \Harm_{1}(4)$, we put $a_{1}+a_{2}+a_{3}+a_{4} = 0$.
\end{ex}

Let $G = (V,E)$ be a graph. For an edge $e$ of $G$, 
the \emph{deletion} $G\backslash e$ is the graph obtained by removing 
$e$ from the edge set $E$, and 
the \emph{contraction} $G/e$ 
is the graph obtained identifying end vertices of $e$ 
keeping all other adjacencies remain the same.

Now we give the following recurrence formula that can be used to calculate the weighted chromatic polynomials.

\begin{prop}\label{Prop:harmchrominduc}
	Let $G = (V,E)$ be a graph and $f \in \Hom_{d}(n)$.
	Suppose $e$ be an edge of $G$. 
	Then for a label~$s$ of $G$, we have 
	\[
		\chi_{f}(G(s);\lambda)
		=
		\widetilde{f}(s(e)) \circ \chi_{f}(G(s)\backslash e;\lambda)
		- 
		\chi_{f}(G(s)/e;\lambda).
	\]
\end{prop}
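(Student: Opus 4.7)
The plan is to expand the left-hand side using the definition of the weighted chromatic polynomial and split the sum over $A\subset E$ into the two disjoint cases $e\notin A$ and $e\in A$; each partial sum will be shown to equal one of the two terms on the right-hand side.

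For the case $e\notin A$, set $E':=E\setminus\{e\}$, so $A\subset E'$, the edge set of $G\backslash e$. I would use two observations. First, since $e\in E\setminus A$, the subset $s(E\setminus A)$ decomposes as $\{s(e)\}\cup s(E'\setminus A)$, so by the definition of $\circ$ one has
\[
\widetilde{f}(s(E\setminus A)) \;=\; \widetilde{f}(s(e))\circ \widetilde{f}(s(E'\setminus A)).
\]
Second, because $e\notin A$, the subgraph $G_A$ coincides with $(G\backslash e)_A$, whence $k(G_A)=k((G\backslash e)_A)$. Distributivity of $\circ$ over addition lets me pull $\widetilde{f}(s(e))$ outside the sum, so this partial sum is exactly $\widetilde{f}(s(e))\circ \chi_f(G(s)\backslash e;\lambda)$.

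For the case $e\in A$, I would reindex by writing $A=B\cup\{e\}$ with $B\subset E'$. Then $E\setminus A=E'\setminus B$, giving $\widetilde{f}(s(E\setminus A))=\widetilde{f}(s(E'\setminus B))$, and $|A|=|B|+1$ produces the overall sign $-1$. The remaining ingredient is the identity $k(G_A)=k((G/e)_B)$: in $G_A$ the edge $e$ already links its two endpoints $u$ and $v$, so identifying $u$ with $v$ (as in $G/e$) and then adding the edges of $B$ yields the same partition of the vertex set as taking $G_{B\cup\{e\}}$ directly. Reindexing the sum therefore gives $-\chi_f(G(s)/e;\lambda)$.

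Summing the two pieces produces the asserted recurrence. The argument is essentially bookkeeping; the only non-trivial step is the equality $k(G_A)=k((G/e)_B)$ for $e\in A$, which is the standard deletion--contraction observation underlying the ordinary chromatic polynomial recurrence. No harmonicity of $f$ is required, so the identity holds for every $f\in\Hom_d(n)$.
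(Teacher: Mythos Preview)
Your proof is correct and follows essentially the same route as the paper: both split the sum over $A\subset E$ according to whether $e\in A$, and identify each piece with the corresponding term on the right-hand side. The paper carries out a separate case analysis according to whether $e$ is a loop, a bridge, or neither, whereas your uniform use of the identity $k(G_{B\cup\{e\}})=k((G/e)_B)$ handles all three cases at once, which is a little cleaner.
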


\begin{proof}
	Let $e$ be an edge of $G$.  
	Then for a subset $A$ of $E\setminus \{e\}$, 
	we denote by $t$ the term $(-1)^{|A|}\lambda^{k(G_{A})}$.
	Also every subset $A$ of $E\setminus \{e\}$ 
	corresponds to a pair of subsets
	$A$ and $A \cup \{e\}$ of $E$. Let $Q := A \cup \{e\}$. 
	Then we have $|E\setminus \{e\}| = |E| - 1$ and $|Q| = |A| + 1$.
	
	Suppose $e$ is a loop. 
	Then $k(G_{A}) = k(G_{Q})$.
	Clearly, $\widetilde{f}(s(E\setminus A))t$ and 
	$\widetilde{f}(s(E \setminus Q))t$ 
	be the terms in 
	$\chi_{f}(G(s)\setminus e;\lambda)$ 
	and
	$\chi_{f}(G(s)/e;\lambda)$, 
	respectively corresponding to the set $A$.
	Then the term in $\chi_{f}(G(s);\lambda)$ corresponding to $A$
	is $\widetilde{f}(s(e)) \circ \widetilde{f}(s(E\setminus A))t$,
	while the term corresponding to $Q$	is 
	$(-1)
	\widetilde{f}(s(E \setminus Q))t$.
	So
	\[
		\chi_{f}(G(s);\lambda)
		= 
		\widetilde{f}(s(e))  
		\circ
		\chi_{f}(G(s)\backslash e;\lambda)
		-
		\chi_{f}(G(s)/e;\lambda).
	\]
	
	Now suppose that $e$ is a bridge.
	Then $k(G_{A}) -1 = k(G_{Q})$.
	It is immediate that $\widetilde{f}(s(E\setminus A))t$ and 
	$\widetilde{f}(s(E\setminus Q))t/\lambda$ 
	be the terms in 
	$\chi_{f}(G(s)\backslash e;\lambda)$ and $\chi_{f}(G(s)/e;\lambda)$,
	respectively corresponding to the set $A$.
	Then the term in $\chi_{f}(G(s);\lambda)$ corresponding to $A$
	is $\widetilde{f}(s(e)) \circ \widetilde{f}(s(E\setminus A))t$,
	while the term corresponding to $Q$	is 
	$(-1)
	\widetilde{f}(s(E \setminus Q))t/\lambda$.
	So
	\[
		\chi_{f}(G(s);\lambda) 
		= 
		\widetilde{f}(s(e))
		\circ
		\chi_{f}(G(s)\backslash e;\lambda)
		-
		\chi_{f}(G(s)/e;\lambda).
	\]
	
	Finally, suppose $e$ is neither loop nor bridge.
	Then we can write
	\begin{multline*}
		\chi_{f}(G(s);\lambda) 
		= 
		\widetilde{f}(s(e))
		\circ
		\sum_{A \subseteq E \setminus e}
		(-1)^{|A|}\lambda^{k(G_{A})}
		+
		\sum_{A \subseteq E \setminus e}
		(-1)^{|Q|}
		\lambda^{k(G_{Q})}.		
	\end{multline*}
	Since $k(G_{A}) = k(G_{Q})$
	for $A \subseteq E \setminus e$, 
	we complete the proof.
\end{proof}

Now from the above proposition together with Remark~\ref{Rem:harmchromempty} 
we have the following inductive formulation:

\begin{prop}\label{Prop:HarmChromInduc}
	The weighted chromatic polynomials satisfies the following recursion:
	\begin{itemize}
		\item [(a)]
		If $G$ has $m$ vertices and no edge, then 
		$\chi_{f}(G(s);\lambda) = \widetilde{f}(\emptyset) \lambda^{m}$. 
		
		\item [(b)]
		If $e$ is a loop, then 
		$$\chi_{f}(G(s);\lambda) = \widetilde{f}(s(e)) \circ \chi_{f}(G(s)\backslash e;\lambda) - \chi_{f}(G(s)\backslash e;\lambda).$$
		
		
		\item [(c)]
		If $e$ is an edge which is not a loop, then
		$$\chi_{f}(G(s);\lambda) = \widetilde{f}(s(e)) \circ \chi_{f}(G(s)\backslash e;\lambda) - \chi_{f}(G(s)/e;\lambda).$$
	\end{itemize}
\end{prop}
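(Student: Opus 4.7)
The plan is to deduce all three parts of the proposition directly from Proposition~\ref{Prop:harmchrominduc} together with Remark~\ref{Rem:harmchromempty}, without having to reopen the summation over $A \subseteq E$. Part (a) is simply a restatement of Remark~\ref{Rem:harmchromempty}, so no work is required there.

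For part (c), I would observe that the hypothesis ``$e$ is not a loop'' covers the second and third cases treated in the proof of Proposition~\ref{Prop:harmchrominduc} (namely, $e$ is a bridge, or $e$ is neither a loop nor a bridge), and in both of those cases the conclusion reached there is exactly
\[
\chi_{f}(G(s);\lambda)
=
\widetilde{f}(s(e))\circ \chi_{f}(G(s)\backslash e;\lambda)
-
\chi_{f}(G(s)/e;\lambda).
\]
So (c) is immediate.

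For part (b), the key observation is that when $e$ is a loop, the two endpoints of $e$ already coincide, so identifying them has no effect on the vertex set; removing $e$ from the edge set then produces the same labelled graph as the deletion. In other words, $G(s)/e = G(s)\backslash e$ as labelled graphs, and therefore
\[
\chi_{f}(G(s)/e;\lambda)=\chi_{f}(G(s)\backslash e;\lambda).
\]
Substituting this identity into the loop case of Proposition~\ref{Prop:harmchrominduc} yields the stated formula in (b).

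The only point that requires a moment of care is the compatibility of the label $s$ with the operations $G\mapsto G\backslash e$ and $G\mapsto G/e$: in each case the induced label is the restriction of $s$ to $E\setminus\{e\}$, and so the appeal to Proposition~\ref{Prop:harmchrominduc} is justified. I do not expect any genuine obstacle here; the statement is essentially a bookkeeping consequence of the recurrence already established.
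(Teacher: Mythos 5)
Your proposal is correct and follows essentially the same route as the paper, which derives this proposition in one line from Proposition~\ref{Prop:harmchrominduc} together with Remark~\ref{Rem:harmchromempty}. The only step you spell out that the paper leaves implicit is the identification $G(s)/e = G(s)\backslash e$ when $e$ is a loop, which is exactly the right justification for part (b).
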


\section{Weighted Tutte polynomials}\label{Sec:WeightedTutte}

In this section, 
we present a correspondence between
the harmonic chromatic polynomials of a graph associated with 
a harmonic function of degree~$d$ and 
the harmonic Tutte polynomials of a 
matroid associated with a harmonic function of degree~$d$.
We refer the reader to~\cite{Aigner2007, Cameron} for the basic
definitions and notations of the usual Tutte polynomial of a matroid.

Let $M = (E,\I)$ be a matroid with rank function $\rho$.
We assume that the cardinality of~$|E|=n$. 
Let $s$ be a bijective map from ground set $E$ to $\Omega$, and 
$M{(s)}$ be a matroid, where the elements of the ground set $E$ 
are indexed by $s$.
We call $M(s)$ the \emph{labelled matroid} 
and $s$ the \emph{label} of the matroid~$M$.
Let $f$ be a homogeneous function of degree~$d$. 
Then the \emph{weighted Tutte polynomial} of $M$ 
associated with~$f$ and $s$ is defined as follows: 
\[
	T_{f}(M(s);x,y)
	:=
	\sum_{A \subset E}
	\widetilde{f}(s(A))
	(x-1)^{\rho(E)-\rho(A)}
	(y-1)^{|A|-\rho(A)}.
\] 
In a graphic matroid, 
$s$ is a label for the matroid if and only if $s$ is a label for the underlying graph.

In the definition of the weighted Tutte polynomial of a matroid, 
we can consider any map $s$ instead of only a bijective map 
in a similar way as above. 
We will discuss the properties 
of such weighted Tutte polynomials in some subsequent papers~\cite{WTGII}. 
In this paper, we consider $s$ as a bijective mapping.

If $f$ is a harmonic function of degree $d$, 
then $T_{f}(M(s);x,y)$ is called the
\emph{harmonic Tutte polynomial} of $M$ associated to $f$ and $s$. 
If $f = 1$, then the weighted (harmonic) Tutte polynomials 
become the classical Tutte polynomial $T(M;x,y)$ independent of the label $s$.
Let $G = (V,E)$ be graph. 
Then for any $A \subset E$, we define
\[
	\rho(A) := |V|-k(G_{A}).
\]
Therefore, we find that $M_{G}$ is a matroid associated to $G$ 
having rank function~$\rho$. 

\begin{ex}\label{Ex:WeightedTutte}
	From Example~\ref{Ex:WeightedChrom}, 
	we have the weighted Tutte polynomial as below:
	\begin{multline*}
		T_{f}(M_{G}(s);x,y)
		=
		(a_1+a_2+a_3+a_4) 
		(x-1)^{2}
		+
		(a_2+a_3+a_4)
		(x-1)
		(y-1)\\
		+
		3(a_1+a_2+a_3+a_4)
		(x-1)
		+
		2(a_1+a_2+a_3+a_4) + a_1
	\end{multline*}
\end{ex}

\begin{rem}\label{Rem:rho_k}
	For $A \subset E$,
	$\rho(A) + k(G_{A}) = |V| = \rho(E) + k(G)$.
\end{rem}

Before giving a recurrence formula for the calculation of the
weighted Tutte polynomials, we recall~\cite{Cameron} for the definitions of 
the loop, coloop, deletion and contraction from the matroid theoretical point of view. 

Let $M = (E,\I)$ be a matroid. 
An element $e \in E$ is called a \emph{loop} if  $\{e\} \notin \I$,
or equivalently, if $\rho(\{e\}) = 0$. 
In a graphic matroid, $e$ is a loop if and only if it
is a loop of the underlying graph. 
An element $e \in E$ is a \emph{coloop} if 
it is contained in every basis of $M$.
This implies $\rho(A \cup \{e\}) = \rho(A)+1$,
whenever $e \notin A$. 
In a graphic matroid, $e$ is a coloop if and only if it is
a bridge. 
The \emph{deletion} of $e \in E$ is a matroid~$M\backslash e$ 
on the set $E \setminus \{e\}$
containing the independent sets of $M$ which are contained in $E \setminus \{e\}$. 
In a graphic matroid, deletion of $e$ corresponds to deletion of the edge $e$ from the
graph.
The \emph{contraction} of $e$ is the matroid
$M/e$ on the set $E \setminus \{e\}$ in which a set $A$ is independent 
if and only if $A \cup \{e\}$ is independent in $M$. 
In a graphic matroid, contraction of $e$ corresponds to 
contraction of the edge $e$.

\begin{prop}\label{Prop:TutteInduc}
	Let $M = (E,\I)$ be a matroid with rank function $\rho$ and $f \in \Hom_{d}(n)$.
	Suppose $e \in E$. Then for a label~$s$ of $M$, we have
	\begin{itemize}
		\item [(a)] $T_{f}(\emptyset;x,y) = \widetilde{f}(\emptyset)$, 
		where $\emptyset$ is the empty matroid.
		\item [(b)] If $e$ is a loop, then 
		$
			T_{f}(M(s);x,y) 
			= 
			T_{f}(M(s)\backslash e;x,y)
			+
			(y-1)\widetilde{f}(s(e)) 
			\circ 
			T_{f}(M(s)\backslash e;x,y).
		$
		\item [(c)] If $e$ is a coloop, then 
		$
			T_{f}(M(s);x,y) 
			= 
			(x-1)T_{f}(M(s)/e;x,y) 
			+ 
			\widetilde{f}(s(e)) 
			\circ 
			T_{f}(M(s)/e;x,y).
		$
		\item [(d)] If $e$ is neither a loop nor a coloop, then
		$$
			T_{f}(M(s);x,y) 
			= 
			T_{f}(M(s)\backslash e;x,y) 
			+ 
			\widetilde{f}(s(e)) \circ T_{f}(M(s)/e;x,y).
		$$
	\end{itemize}
\end{prop}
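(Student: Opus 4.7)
The plan is to mimic the standard deletion--contraction proof for the classical Tutte polynomial, tracking the weight factor $\widetilde{f}(s(A))$ through the splitting. For each case I would partition the defining sum
\[
T_{f}(M(s);x,y)
=
\sum_{A\subset E}
\widetilde{f}(s(A))
(x-1)^{\rho(E)-\rho(A)}
(y-1)^{|A|-\rho(A)}
\]
into the subsets $A\subset E\setminus\{e\}$ (the ``deletion part'') and the subsets of the form $A'\cup\{e\}$ with $A'\subset E\setminus\{e\}$ (the ``contraction part''). On the latter, I use the associativity of $\circ$ together with the evident identity $\widetilde{f}(s(A'\cup\{e\}))=\widetilde{f}(s(e))\circ\widetilde{f}(s(A'))$ to pull the factor $\widetilde{f}(s(e))$ outside the sum, so that the remaining sum over $A'$ will be identified with either $T_{f}(M(s)\backslash e;x,y)$ or $T_{f}(M(s)/e;x,y)$ according to the role of $e$.

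Case (a) is immediate since only $A=\emptyset$ contributes and $\rho(E)=0$. For case (b), $e$ being a loop means $\rho(\{e\})=0$, $\rho(E\setminus\{e\})=\rho(E)$, and $\rho_M(A'\cup\{e\})=\rho_M(A')$ for every $A'\subset E\setminus\{e\}$; consequently the deletion part recovers $T_{f}(M(s)\backslash e;x,y)$, while in the contraction part $|A'\cup\{e\}|-\rho(A'\cup\{e\})=|A'|-\rho(A')+1$ contributes an extra factor $(y-1)$, producing $(y-1)\,\widetilde{f}(s(e))\circ T_{f}(M(s)\backslash e;x,y)$. For case (c), $e$ being a coloop means $\rho(\{e\})=1$, $\rho(E)=\rho(E\setminus\{e\})+1$, and $\rho_M(A'\cup\{e\})=\rho_M(A')+1$; here the rank identity $\rho_{M/e}(A')=\rho_M(A'\cup\{e\})-1=\rho_M(A')$ is used to match both summands to $T_{f}(M(s)/e;x,y)$, the deletion part picking up an extra factor $(x-1)$ from $\rho(E)-\rho(A')=\rho_{M/e}(E\setminus\{e\})-\rho_{M/e}(A')+1$. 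Case (d) combines the two: since $e$ is not a coloop, $\rho(E)=\rho(E\setminus\{e\})$, so the deletion part is exactly $T_{f}(M(s)\backslash e;x,y)$; since $e$ is not a loop, $\rho(\{e\})=1$, so $\rho_M(A'\cup\{e\})=\rho_{M/e}(A')+1$, and a direct exponent check makes the contraction part equal to $\widetilde{f}(s(e))\circ T_{f}(M(s)/e;x,y)$.

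There is no real obstacle beyond bookkeeping; the one place that deserves genuine care is the verification, in case (d), that the exponents
\[
\rho_M(E)-\rho_M(A'\cup\{e\})=\rho_{M/e}(E\setminus\{e\})-\rho_{M/e}(A'),
\qquad
|A'\cup\{e\}|-\rho_M(A'\cup\{e\})=|A'|-\rho_{M/e}(A'),
\]
both collapse correctly, which is where the assumption that $e$ is neither a loop nor a coloop enters through $\rho(\{e\})=1$ and $\rho(E\setminus\{e\})=\rho(E)$. The weight function plays no deep role: all that is ever needed from it is the multiplicative behaviour of $\widetilde{f}(s(\cdot))$ under $\circ$, which holds by definition, so once the scalar recurrence for the classical Tutte polynomial is written down with the coefficient $\widetilde{f}(s(A))$ attached to each term, the four assertions follow at once.
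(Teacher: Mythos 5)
Your proposal is correct and follows essentially the same route as the paper: split the defining sum over $A\subset E$ into the pairs $A'$ and $A'\cup\{e\}$ for $A'\subset E\setminus\{e\}$, use $\widetilde{f}(s(A'\cup\{e\}))=\widetilde{f}(s(e))\circ\widetilde{f}(s(A'))$ (immediate from the definition of $\circ$) to factor out the weight, and match exponents via the standard rank identities for deletion and contraction in each of the loop/coloop/neither cases. The bookkeeping you outline is exactly what the paper carries out.
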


\begin{proof}
	For $e \in E$, 
	every subset $A$ of $E\setminus \{e\}$ 
	corresponds to a pair of subsets
	$A$ and $A \cup \{e\}$ of $E$.
	Let $\rho, \rho^{\prime}$ and $\rho^{\prime\prime}$ denote 
	the rank functions of $M, M\setminus e$ and $M/e$, respectively.
	Then (a) is trivial.
	Consider (b).
	Suppose $e$ is a loop. Then the following hold:
	\begin{itemize}
		\item [(i)] $|E\setminus \{e\}| = |E| - 1$;
		\item [(ii)] $\rho^{\prime}(E\setminus \{e\}) = \rho(E)$;
		\item [(iii)] $|A \cup \{e\}| = |A| + 1$;
		\item [(iv)] $\rho(A) = \rho(A \cup \{e\}) = \rho^{\prime}(A)$.
	\end{itemize}
	Let $\widetilde{f}(s(A))t$ and $\widetilde{f}(s(A\cup \{e\}))t$ 
	be the terms in 
	$T_{f}(M(s)\backslash e)$ and 
	$\widetilde{f}(s(e)) \circ T_{f}(M(s)\backslash e)$,
	respectively corresponding to the set $A$.
	Then the term in $T_{f}(M(s))$ corresponding to $A$
	is $\widetilde{f}(s(A))t$,
	while the term corresponding to $A \cup \{e\}$
	is $(y-1)\widetilde{f}(s(A \cup \{e\}))t$.
	So,
	\[
		T_{f}(M(s);x,y) 
		= 
		T_{f}(M(s)\backslash e;x,y)
		+
		(y-1) 
		\widetilde{f}(s(e)) 
		\circ 
		T_{f}(M(s)\backslash e; x,y).
	\]
	
	Now suppose that $e$ is a coloop.
	Then the following hold:
	\begin{itemize}
		\item [(i)] 
		$|E\setminus \{e\}| = |E| - 1$;
		\item [(ii)] 
		$\rho^{\prime\prime}(E\setminus \{e\}) = \rho(E)-1$;
		\item [(iii)] 
		$|A \cup \{e\}| = |A| + 1$;
		\item [(iv)] 
		$\rho(A) = \rho(A \cup \{e\})-1 = \rho^{\prime\prime}(A)$.
		\item [(v)] 
		$\rho(E) - \rho(A) = 
		\rho^{\prime\prime}(E\setminus\{e\}) -\rho^{\prime\prime}(A) + 1$.
		\item [(vi)] 
		$\rho(E) - \rho(A \cup \{e\}) 
		= \rho^{\prime\prime}(E\setminus\{e\}) -\rho^{\prime\prime}(A)$.
	\end{itemize}
	Let $\widetilde{f}(s(A))t$ and $\widetilde{f}(s(A\cup \{e\}))t$ 
	be the terms in 
	$T_{f}(M(s)/e)$ and 
	$\widetilde{f}(s(e)) \circ T_{f}(M(s)/e)$,
	respectively corresponding to the set $A$.
	Then the term in $T_{f}(M(s))$ corresponding to $A$
	is $(x-1)\widetilde{f}(s(A))t$,
	while the term corresponding to $A \cup \{e\}$
	is $f(s(A \cup \{e\}))t$.
	Therefore,
	\[
		T_{f}(M(s);x,y) 
		= 
		(x-1)T_{f}(M(s)/e;x,y) 
		+ 
		\widetilde{f}(s(e)) \circ T_{f}(M(s)/e;x,y).
	\]
	
	Finally, suppose $e$ is neither loop or coloop.
	Then we can write
	\begin{multline*}
		T_{f}(M(s);x,y) 
		= 
		\sum_{A \subseteq E \setminus \{e\}}
		\widetilde{f}(s(A))
		(x-1)^{\rho(E)-\rho(A)}
		(y-1)^{|A|-\rho(A)}\\
		+
		\sum_{A \subseteq E \setminus \{e\}}
		\widetilde{f}(s(A \cup \{e\}))
		(x-1)^{\rho(E)-\rho(A\cup \{e\})}
		(y-1)^{|A \cup \{e\}|-\rho(A\cup \{e\})}.		
	\end{multline*}
	Now for $A \subseteq E \setminus \{e\}$,
	\begin{align*}
		\rho(A\cup \{e\}) & = \rho^{\prime\prime}(A)+1 \text{ since $e$ is not a loop},\\
		\rho(E) & = \rho(M/e) + 1 \text{ since $e$ is not a loop},\\
		\rho(A) & = \rho^{\prime}(A) \text{ since $e$ is not a coloop},\\
		\rho(E) & = \rho(M\backslash e) \text{ since $e$ is not a coloop}.		
	\end{align*}
	This completes the proof.
\end{proof}

The following result shows a correspondence between the harmonic chromatic
polynomials and the harmonic Tutte polynomials.

\begin{thm}\label{Thm:RelationChromTutte}
	Let $G =(V,E)$ be a graph with $|V|=m$ and $|E| =n$. 
	Let $f \in \Harm_{d}(n)$.
	Then for any label $s$ of $G$, we have
	\[
		\chi_{f}(G(s);\lambda)
		=
		(-1)^{\rho(E)+d}
		\lambda^{k(G)}
		T_{f}(M_{G}(s);1-\lambda,0).
	\]
\end{thm}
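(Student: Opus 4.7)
The plan is to expand the right-hand side directly from the definition of $T_f$ and match it term by term with the defining sum of $\chi_f(G(s);\lambda)$. First I would substitute $x = 1-\lambda$ and $y = 0$ into the defining sum, which gives
\[
(-1)^{\rho(E)+d}\lambda^{k(G)}T_f(M_G(s);1-\lambda,0)=\sum_{A\subset E}\widetilde{f}(s(A))\,(-1)^{\rho(E)+d}\lambda^{k(G)}(-\lambda)^{\rho(E)-\rho(A)}(-1)^{|A|-\rho(A)}.
\]

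Next I would simplify the sign and the $\lambda$-power separately. The accumulated sign exponent is $\rho(E)+d+(\rho(E)-\rho(A))+(|A|-\rho(A)) = 2\rho(E)-2\rho(A)+d+|A|$, which is congruent to $d+|A|$ modulo $2$, so the combined sign is simply $(-1)^{d+|A|}$. For the $\lambda$-power, Remark~\ref{Rem:rho_k} gives $\rho(E)-\rho(A)=k(G_A)-k(G)$, so $\lambda^{k(G)}\cdot\lambda^{\rho(E)-\rho(A)}=\lambda^{k(G_A)}$. The right-hand side then collapses to
\[
\sum_{A\subset E}(-1)^{d}\widetilde{f}(s(A))(-1)^{|A|}\lambda^{k(G_A)}.
\]

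The crucial final step is to compare this with the definition
\[
\chi_f(G(s);\lambda)=\sum_{A\subset E}\widetilde{f}(s(E\setminus A))(-1)^{|A|}\lambda^{k(G_A)}.
\]
Here the harmonic hypothesis is invoked decisively: since $s$ is a bijection onto $\Omega$, we have $s(E\setminus A)=\Omega\setminus s(A)$, and Remark~\ref{Rem:BachocLem} (which applies Lemma~\ref{Lem:Bachoc} to $f\in\Harm_d(n)$) yields $\widetilde{f}(\Omega\setminus J)=(-1)^{d}\widetilde{f}(J)$ for every $J\subset\Omega$. Taking $J=s(A)$ gives $\widetilde{f}(s(E\setminus A))=(-1)^{d}\widetilde{f}(s(A))$ term by term, and the two sums agree. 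The only real obstacle is careful sign bookkeeping; all the substantive content lies in this last identification, and without harmonicity one cannot convert a Tutte-style sum indexed by subsets into a chromatic-style sum weighted by complements, so the hypothesis $f\in\Harm_d(n)$ is essential.
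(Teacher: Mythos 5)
Your proposal is correct, and it coincides with the direct computation the paper itself records immediately after its formal proof: expanding $T_{f}$ at $(1-\lambda,0)$, using Remark~\ref{Rem:rho_k} to turn $\lambda^{\rho(E)-\rho(A)}$ into $\lambda^{k(G_A)-k(G)}$, and invoking Remark~\ref{Rem:BachocLem} to trade $\widetilde{f}(s(E\setminus A))$ for $(-1)^{d}\widetilde{f}(s(A))$. The paper's primary proof instead proceeds by induction on edges via the deletion--contraction recurrences of Propositions~\ref{Prop:HarmChromInduc} and~\ref{Prop:TutteInduc}, but your argument is complete and your sign bookkeeping checks out.
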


\begin{proof}
	The matroid $M_{G}$ associated to the graph $G$ has the rank 
	{$\rho(J) = |V| - k(G_{J})$} for $J \subset E$.
	Let $\lambda$ be a positive integer.
	Since~$f\in \Harm_{d}(n)$, therefore we have from Remark~\ref{Rem:BachocLem} that
	$\widetilde{f}(E\setminus J) = (-1)^{d}\widetilde{f}(J)$ for any~$J\subset E$.
	Now we need to show by induction that 
	for~$f\in\Harm_{d}(n)$,
	\begin{equation}\label{Equ:Relation}
		\chi_{f}(G(s);\lambda)
		=
		(-1)^{\rho(E)+d}
		\lambda^{k(G)}
		T_{f}(M_{G}(s);1-\lambda,0).
	\end{equation}
	Let $\rho$, $\rho^{\prime}$ and $\rho^{\prime\prime}$ 
	be the rank functions of the matroids 
	$M_{G}$, $M_{G\backslash e}$ and~$M_{G/e}$, respectively.	
	
	The relation~(\ref{Equ:Relation}) is trivial when there is no edge since
	$\rho(E) = 0$, $k(G) = m$ and $T_{f}(M_{G}(s);1-\lambda,0) = \widetilde{f}(\emptyset)$. 
	
	Suppose $e$ is a loop. 
	Then $k(G) = k(G\backslash e)$.
	Also $\rho^{\prime}(E\setminus \{e\}) = \rho(E)$.
	Now by Proposition~\ref{Prop:TutteInduc}(b), we have
	\begin{multline}\label{Equ:TutteLoop}
		T_{f}(M_{G}(s);1-\lambda,0) \\
		= 
		T_{f}(M_{G\backslash e}(s);1-\lambda,0) 
		- 
		\widetilde{f}(s(e)) 
		\circ 
		T_{f}(M_{G\backslash e}(s);1-\lambda,0).
	\end{multline}
	Let $A \subset E\backslash \{e\}$.
	Then $T_{f}(M_{G\backslash e}(s);1-\lambda,0)$
	corresponds to the terms in 
	$T_{f}(M_{G}(s);1-\lambda,0)$ involving $A$,
	whereas 
	$\widetilde{f}(s(e)) \circ T_{f}(M_{G\backslash e}(s);1-\lambda,0)$
	corresponds to the terms in 
	$T_{f}(M_{G}(s);1-\lambda,0)$ involving $A \cup \{e\}$.
	On the other hand, 
	$\widetilde{f}(s(e)) \circ \chi_{f}(G(s)\backslash e;\lambda)$
	and
	$\chi_{f}(G(s)\backslash e;\lambda)$
	correspond to the term in 
	$\chi_{f}(G(s);\lambda)$
	involving~$A$ and~$A\cup \{e\}$,
	respectively. 
	Therefore, we can have
	\begin{equation}\label{Equ:RelationLoop1}
		\widetilde{f}(s(e)) 
		\circ
		\chi_{f}(G(s)\backslash e;\lambda)
		=
		(-1)^{\rho(E)+d}
		\lambda^{k(G)}
		T_{f}(M_{G\backslash e}(s);1-\lambda,0).
	\end{equation}
	\begin{equation}\label{Equ:RelationLoop2}
		\chi_{f}(G(s)\backslash e;\lambda)
		=
		(-1)^{\rho(E)+d}
		\lambda^{k(G)}
		(\widetilde{f}(s(e)) 
		\circ
		T_{f}(M_{G\backslash e}(s);1-\lambda,0)).
	\end{equation}
	Combining 
	(\ref{Equ:TutteLoop}), 
	(\ref{Equ:RelationLoop1}), 
	(\ref{Equ:RelationLoop2}), 
	and using Proposition~\ref{Prop:HarmChromInduc}(b)
	we can have the relation~(\ref{Equ:Relation}).
	
	Suppose $e$ is a bridge. 
	Then $k(G\backslash e) = k(G)+1$, and $k(G/e) = k(G)$.
	Then 
	$\rho^{\prime}(E\setminus \{e\}) 
	= 
	\rho^{\prime\prime}(E\setminus \{e\})
	= 
	\rho(E)-1$.
	By Proposition~\ref{Prop:TutteInduc}(c), we have
	\begin{multline}\label{Equ:TutteColoop}
		T_{f}(M_{G}(s);1-\lambda,0) \\
		= 
		(-\lambda)
		T_{f}(M_{G/e}(s);1-\lambda,0) 
		+ 
		\widetilde{f}(s(e)) 
		\circ 
		T_{f}(M_{G/e}(s);1-\lambda,0)
	\end{multline}
	Let $A \subset E\backslash \{e\}$.
	Then $(-\lambda) T_{f}(M_{G/e}(s);1-\lambda,0)$
	corresponds to the terms in 
	$T_{f}(M_{G}(s);1-\lambda,0)$ involving $A$,
	whereas 
	$\widetilde{f}(s(e)) \circ T_{f}(M_{G/e}(s);1-\lambda,0)$
	corresponds to the terms in 
	$T_{f}(M_{G}(s);1-\lambda,0)$ involving $A \cup \{e\}$.
	On the other hand, 
	$\widetilde{f}(s(e)) \circ \chi_{f}(G(s)\backslash e;\lambda)$
	and
	$\chi_{f}(G(s)/e;\lambda)$
	correspond to the term in 
	$\chi_{f}(G(s);\lambda)$
	involving~$A$ and~$A\cup \{e\}$,
	respectively.
	Therefore, we can have
	\begin{multline}\label{Equ:RelationBridge1}
		\widetilde{f}(s(e)) 
		\circ
		\chi_{f}(G(s)\backslash e;\lambda)\\
		=
		(-1)^{\rho(E)-1+d}
		\lambda^{k(G)+1}
		T_{f}(M_{G/e}(s);1-\lambda,0).
	\end{multline}
	\begin{multline}\label{Equ:RelationBridge2}
		\chi_{f}(G(s)/e;\lambda)\\
		=
		(-1)^{\rho(E)-1+d}
		\lambda^{k(G)}
		(\widetilde{f}(s(e)) 
		\circ
		T_{f}(M_{G/e}(s);1-\lambda,0)).
	\end{multline}
	Combining 
	(\ref{Equ:TutteColoop}), 
	(\ref{Equ:RelationBridge1}), 
	(\ref{Equ:RelationBridge2}) 
	and by using 
	Proposition~\ref{Prop:HarmChromInduc}(c), 
	we can have the relation~(\ref{Equ:Relation}).
	
	Finally, suppose that $e$ is neither a loop nor a bridge.
	Then $k(G) = k(G\backslash e) = k(G/e)$. 
	Also $\rho^{\prime}(E\setminus \{e\}) = \rho(E)$,
	and $\rho^{\prime\prime}(E\setminus \{e\}) = \rho(E)-1$. 
	Therefore, by similar argument as above we can show relation~(\ref{Equ:Relation})
	is true by using Proposition~\ref{Prop:HarmChromInduc}(c) together with Proposition~\ref{Prop:TutteInduc}(d).
	\end{proof}

	We can also give a direct proof of the above theorem with the 
	help of Remark~\ref{Rem:rho_k} as follows:
	
	\begin{align*}
		(-1)^{\rho(E)}
		& \lambda^{-k(G)}
		\chi_{f}(G(s);\lambda)\\
		& =
		\sum_{A \subset E}
		\widetilde{f}(s(E\setminus A))
		(-1)^{\rho(E)+|A|}
		\lambda^{k(G_{A})-k(G)}\\
		& =
		(-1)^{d}
		\sum_{A \subset E}
		\widetilde{f}(s(A))
		(-1)^{\rho(E)+|A|}
		\lambda^{\rho(E)-\rho(A)}\\
		& =
		(-1)^{d}
		\sum_{A \subset E}
		\widetilde{f}(s(A))
		(-1)^{\rho(E)+|A|}
		(-1)^{-2\rho(A)}
		\lambda^{\rho(E)-\rho(A)}\\
		&=
		(-1)^{d}
		\sum_{A \subset E}
		\widetilde{f}(s(A))
		(-1)^{\rho(E)-\rho(A)}
		(-1)^{|A|-\rho(A)}
		\lambda^{\rho(E)-\rho(A)}\\
		& =
		(-1)^{d}
		\sum_{A \subset E}
		\widetilde{f}(s(A))
		(-1)^{|A|-\rho(A)}
		(-\lambda)^{\rho(E)-\rho(A)}\\
		& =
		(-1)^{d}
		T_{f}(M_{G}(s);1-\lambda,0).
	\end{align*}
	Multiplying both sides by $(-1)^{\rho(E)} \lambda^{k(G)}$ we get the result.

\section{Harmonic Tutte--Grothendieck polynomials}\label{Sec:HarmTG}

In this section, we present the concept of harmonic Tutte--Grothendieck polynomial (harmonic T--G polynomial). 
Moreover, we give a generalization of the classical relation 
known as the recipe theorem between the Tutte--Grothendieck invariants 
and the Tutte polynomials 
to the case of harmonic T--G polynomials.

\begin{df}\label{Def:TGpoly}
Let $G = (V,E)$ be a graph with number of edges~$n$.
Let $s$ be a label of $G$,
and $e$ be an edge of $G$.
Let $f \in \Hom_{d}(n)$. 
Then the \emph{weighted Tutte-Grothendieck polynomial} 
of $G$ associated to $f$ and $s$ is denoted by 
$\Phi_{f}(G(s)) := \Phi_{f}(G(s);X,Y,\alpha,\beta)$ 
and defined recursively for $\alpha \neq 0$ and $\beta \neq 0$ 
as follows:
\begin{itemize}
	\item [(a)]
	If $G$ has no edge, then 
	$\Phi_{f}(G(s)) = \widetilde{f}(\emptyset)$.
	
	\item [(b)]
	If $e$ is a loop, then
	$$
	\Phi_{f}(G(s))
	= 
	\alpha
	\widetilde{f}(s(e)) \circ 
	\Phi_{f}(G(s)\backslash e)
	+
	(Y-\alpha)
	\Phi_{f}(G(s)\backslash e).
	$$
		
	\item [(c)]
	If $e$ is a bridge, then
	\[
		\Phi_{f}(G(s))
		= 
		X^\prime
		Y^{\prime}
		\widetilde{f}(s(e)) \circ 
		\Phi_{f}(G(s)\backslash e)
		+
		\beta
		Y^{\prime}
		\Phi_{f}(G(s)\backslash e),
	\]
	where 
	$X^\prime = X - \beta$ and 
	$Y^{\prime} := \dfrac{Y-\alpha}{\alpha}$.
	
	\item [(d)]
	If $e$ is neither a loop nor a bridge, then
	\[
		\Phi_{f}(G(s))
		= 
		\alpha 
		\widetilde{f}(s(e))
		\circ 
		\Phi_{f}(G(s)\backslash e)
		+
		\beta 
		\Phi_{f}(G(s)/e).
	\]
\end{itemize}
\end{df}
If $f$ is a harmonic function of degree~$d$, 
then we call $\Phi_{f}(G{(s)})$ the 
\emph{harmonic Tutte--Grothendieck polynomial} 
of $G$ associated to $f$ and $s$.

\begin{rem}\label{Rem:HarmTG}
	If $f \in \Harm_{d}(n)$, then by Remark~\ref{Rem:Himadri}, 
	we have instantly that
	$\Phi_{f}(G(s)) = (-1)^{d} P_{f}(G(s))$,
	where  $P_{f}(G(s)) := P_{f}(G(s);X,Y,\alpha,\beta)$ 
	is a polynomial of $G$ associated with $f$ and $s$
	satisfying the following recurrence for $\alpha \neq 0$ and $\beta \neq 0$:
	\begin{itemize}
		\item [(a)]
		If $G$ has no edge, then 
		$P_{f}(G(s)) = \widetilde{f}(\emptyset)$.
		
		\item [(b)]
		If $e$ is a loop, then
		$$
		P_{f}(G(s))
		= 
		\alpha P_{f}(G(s)\backslash e)
		+
		(Y-\alpha)
		\widetilde{f}(s(e)) 
		\circ 
		P_{f}(G(s)\backslash e).
		$$
		
		\item [(c)]
		If $e$ is a bridge, then
		\[
			P_{f}(G(s))
			= 
			X^\prime 
			Y^{\prime}
			P_{f}(G(s)\backslash e)
			+
			\beta
			Y^{\prime}
			\widetilde{f}(s(e)) 
			\circ 
			P_{f}(G(s)\backslash e),
		\]
		where 
		$X^{\prime} = X - \beta$ and 
		$Y^{\prime} = \dfrac{Y-\alpha}{\alpha}$.
		
		\item [(d)]
		If $e$ is neither a loop nor a bridge, then
		\[
			P_{f}(G(s))
			= 
			\alpha P_{f}(G(s)\backslash e)
			+
			\beta 
			\widetilde{f}(s(e))
			\circ 
			P_{f}(G(s)/e).
		\]
	\end{itemize}
\end{rem}

Now we give a generalization of the recipe theorem which presents a relation between
the harmonic T--G polynomial and the harmonic Tutte polynomial as follows.

\begin{thm}\label{Thm:HarmRecipe}
	Let $G = (V,E)$ be a graph with $|E| = n$, and
	$M_{G}$ be a matroid associated with $G$.
	Let $\Phi_{f}(G(s);X,Y,\alpha,\beta)$ be a harmonic T--G polynomial 
	of $G$ related to $f \in \Harm_{d}(n)$ and a label $s$ of $G$.
	Then for $\alpha \neq 0$ and $\beta \neq 0$, we have
	\begin{equation}\label{Equ:Recipe}
		\Phi_{f}(G(s); X,Y,\alpha, \beta)
		=
		(-1)^{d}
		\alpha^{|E|-|V|+k(G)}
		\beta^{|V|-k(G)}
		T_{f}
		\left(M_{G}(s);\frac{X}{\beta},\frac{Y}{\alpha}\right).		
	\end{equation}
\end{thm}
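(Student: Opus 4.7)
The plan is to prove this by structural induction on the number of edges $|E|$, paralleling the recursive definition of $\Phi_{f}$ in Definition~\ref{Def:TGpoly} and the recursion for $T_{f}$ in Proposition~\ref{Prop:TutteInduc}. First I would rewrite the exponents using Remark~\ref{Rem:rho_k}, so that $|V|-k(G)=\rho(E)$ and $|E|-|V|+k(G)=|E|-\rho(E)$, whereupon the target identity becomes
\[
\Phi_{f}(G(s); X, Y, \alpha, \beta) = (-1)^{d}\, \alpha^{|E|-\rho(E)}\, \beta^{\rho(E)}\, T_{f}\!\left(M_{G}(s); \tfrac{X}{\beta}, \tfrac{Y}{\alpha}\right).
\]

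It is technically cleaner to work with the auxiliary polynomial $P_{f}$ of Remark~\ref{Rem:HarmTG}, whose recursion (in which the factor $\widetilde{f}(s(e))$ sits on the term parallel to $T_{f}$'s own recursion) mirrors that of $T_{f}$ in Proposition~\ref{Prop:TutteInduc} much more directly than that of $\Phi_{f}$ does. Since $\Phi_{f} = (-1)^{d} P_{f}$ for harmonic $f$, the theorem reduces to
\[
P_{f}(G(s)) = \alpha^{|E|-\rho(E)}\, \beta^{\rho(E)}\, T_{f}\!\left(M_{G}(s); \tfrac{X}{\beta}, \tfrac{Y}{\alpha}\right),
\]
after which multiplying both sides by $(-1)^{d}$ yields the stated theorem. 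The base case $|E|=0$ is immediate, since both sides collapse to $\widetilde{f}(\emptyset)$.

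For the inductive step I would fix an edge $e$ and split into three cases according to whether $e$ is a loop, a bridge, or neither. In each case one substitutes $x = X/\beta$ and $y = Y/\alpha$ into the appropriate Tutte recursion from Proposition~\ref{Prop:TutteInduc}, multiplies through by $\alpha^{|E|-\rho(E)}\beta^{\rho(E)}$, and applies the inductive hypothesis to the minor $G\setminus e$ or $G/e$ to convert the right-hand side into the $P_{f}$-side expression given by Remark~\ref{Rem:HarmTG}. The rank bookkeeping relies on the standard identities: for a loop $\rho_{G\setminus e}(E\setminus\{e\}) = \rho(E)$; for a bridge (a coloop in $M_{G}$) $\rho_{G\setminus e}(E\setminus\{e\}) = \rho_{G/e}(E\setminus\{e\}) = \rho(E)-1$; and in the generic case $\rho_{G\setminus e}(E\setminus\{e\}) = \rho(E)$ while $\rho_{G/e}(E\setminus\{e\}) = \rho(E)-1$. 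The algebraic identities $(y-1) = (Y-\alpha)/\alpha$ and $(x-1) = (X-\beta)/\beta$, combined with the change in prefactor when passing to a minor, precisely reproduce the coefficients $\alpha$, $(Y-\alpha)$, $X'Y'$, $\beta Y'$, and $\beta$ that appear in the $P_{f}$ recursion.

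The bridge case will be the most delicate point. Here one uses the matroid-theoretic fact that $M_{G}/e = M_{G}\setminus e$ whenever $e$ is a coloop, so that $T_{f}(M_{G}/e) = T_{f}(M_{G}\setminus e)$ (and, because $P_{f}$ is essentially a matroid invariant, $P_{f}(G/e) = P_{f}(G\setminus e)$ as well); this reconciles the coloop Tutte recursion, which naturally produces $G/e$, with the bridge recursion for $P_{f}$, which is phrased through $G\setminus e$. The coloop coefficient $(x-1)$ together with the $\beta$ that emerges from $\beta^{\rho(E)} = \beta \cdot \beta^{\rho(E)-1}$ accounts for the factor $X' = X - \beta$, and the remaining factor $Y' = (Y-\alpha)/\alpha$ must be tracked carefully against the companion rescaling in $(y-1)$; once the three cases are verified, multiplying by $(-1)^{d}$ recovers the stated formula for $\Phi_{f}$. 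A parallel direct proof, analogous to the post-theorem computation following Theorem~\ref{Thm:RelationChromTutte}, is also available by expanding both sides as sums over $A\subset E$ and invoking $\widetilde{f}(E\setminus A) = (-1)^{d}\widetilde{f}(A)$ from Remark~\ref{Rem:BachocLem}.
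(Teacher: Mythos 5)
Your overall strategy --- reduce to $P_f$ via Remark~\ref{Rem:HarmTG}, induct on $|E|$, and match the substituted Tutte recursion of Proposition~\ref{Prop:TutteInduc} against the $P_f$ recursion case by case --- is exactly the route the paper takes, and your loop and generic cases go through as you describe. The gap is in the bridge case, precisely the point you flag as ``delicate'' but then leave unresolved. Substituting $x=X/\beta$ into the coloop recursion $T_f(M)=(x-1)T_f(M/e)+\widetilde{f}(s(e))\circ T_f(M/e)$ and multiplying by $\alpha^{|E|-\rho(E)}\beta^{\rho(E)}$ yields $P_f(G)=(X-\beta)P_f(G/e)+\beta\,\widetilde{f}(s(e))\circ P_f(G/e)$, with no factor of $Y'$: the rescaling $(y-1)=(Y-\alpha)/\alpha$ that you invoke as a ``companion'' only arises in the loop case, since the coloop recursion contains no $(y-1)$. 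So the coefficients you obtain are $X-\beta$ and $\beta$, not the $X'Y'$ and $\beta Y'$ of Definition~\ref{Def:TGpoly}(c) and Remark~\ref{Rem:HarmTG}(c), and your claim that these coefficients are ``precisely reproduced'' is not correct as stated.

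This is not something more careful bookkeeping can repair: with Definition~\ref{Def:TGpoly}(c) as written, the identity (\ref{Equ:Recipe}) itself fails. For $G$ a single edge on two vertices and $f=1\in\Harm_{0}(1)$, the definition gives $\Phi_f(G)=\bigl((X-\beta)+\beta\bigr)\frac{Y-\alpha}{\alpha}=X\,\frac{Y-\alpha}{\alpha}$, while the right-hand side of (\ref{Equ:Recipe}) equals $\beta\cdot\frac{X}{\beta}=X$; the two sides differ by exactly the factor $Y'=(Y-\alpha)/\alpha$. (The paper's own proof has the same defect: in its displayed bridge computation the factor $\frac{Y-\alpha}{\alpha}$ is silently dropped at the step where $T_f(M_{G\backslash e})$ is replaced by $T_f(M_{G/e})$.) The repair is to delete the factor $Y'$ from both terms of the bridge rule; once that is done, your induction closes exactly as you outline, using $M_G\backslash e=M_G/e$ for a coloop as you correctly note.
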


\begin{proof}
	From Remark~\ref{Rem:HarmTG}, it is sufficient to show that
	\begin{equation}\label{Equ:PGf}
		P_{f}(G(s); X,Y,\alpha, \beta)
		=
		\alpha^{|E|-|V|+k(G)}
		\beta^{|V|-k(G)}
		T_{f}
		\left(M_{G}(s);\frac{X}{\beta},\frac{Y}{\alpha}\right).
	\end{equation}
	If $G$ has no edge then the formula (\ref{Equ:PGf}) is trivial and so is (\ref{Equ:Recipe}).
	Let $e$ be a loop. Then $k(G) = k(G\backslash e)$.
	Now we have
	\begin{align*}
		P_{f}(G(s))
		& = 
		\alpha 
		P_{f}(G(s)\backslash e)
		+
		(Y-\alpha)
		\widetilde{f}(s(e)) 
		\circ
		P_{f}(G(s)\backslash e)\\
		& =
		\alpha.
		\alpha^{|E|-1-|V|+k(G)}
		\beta^{|V|-k(G)}
		T_{f}
		\left(M_{G\backslash e}(s);\frac{X}{\beta},\frac{Y}{\alpha}\right)\\
		& \quad +
		\alpha^{|E|-|V|+k(G)}
		\beta^{|V|-k(G)}
		\left(\frac{Y-\alpha}{\alpha}\right)
		\widetilde{f}(s(e)) 
		\circ
		T_{f}
		\left(M_{G\backslash e}(s);\frac{X}{\beta},\frac{Y}{\alpha}\right)\\
		& =
		\alpha^{|E|-|V|+k(G)}
		\beta^{|V|-k(G)}
		T_{f}
		\left(M_{G}(s);\frac{X}{\beta},\frac{Y}{\alpha}\right).
	\end{align*} 
	Hence (\ref{Equ:PGf}) as well as (\ref{Equ:Recipe}) is true when $e$ is loop.
	Let $e$ be a bridge. Then $k(G)+1 = k(G\backslash e)$ and we have
	\begin{align*}
		P_{f}(G(s))
		& = 
		(X-\beta)
		\left(\frac{Y-\alpha}{\alpha}\right)
		P_{f}(G(s)\backslash e)
		+
		\beta\left(\frac{Y-\alpha}{\alpha}\right)
		\widetilde{f}(s(e)) 
		\circ 
		P_{f}(G(s)\backslash e)\\
		& =
		\alpha^{|E|-|V|+k(G)}
		\beta^{|V|-k(G)}
		\left(\frac{X-\beta}{\beta}\right)
		\left(\frac{Y-\alpha}{\alpha}\right)
		T_{f}
		\left(M_{G\backslash e}(s);\frac{X}{\beta},\frac{Y}{\alpha}\right)\\
		& \quad +
		\alpha^{|E|-|V|+k(G)}
		\beta.\beta^{|V|-k(G)-1}
		\left(\frac{Y-\alpha}{\alpha}\right)
		\widetilde{f}(s(e)) 
		\circ
		T_{f}
		\left(M_{G\backslash e}(s);\frac{X}{\beta},\frac{Y}{\alpha}\right)\\
		& =
		\alpha^{|E|-|V|+k(G)}
		\beta^{|V|-k(G)}
		\left(\frac{X-\beta}{\beta}\right)
		T_{f}
		\left(M_{G/e}(s);\frac{X}{\beta},\frac{Y}{\alpha}\right)\\
		& \quad +
		\alpha^{|E|-|V|+k(G)}
		\beta^{|V|-k(G)}
		\widetilde{f}(e) \circ
		T_{f}
		\left(M_{G/e}(s);\frac{X}{\beta},\frac{Y}{\alpha}\right)\\
		& =
		\alpha^{|E|-|V|+k(G)}
		\beta^{|V|-k(G)}
		T_{f}
		\left(M_{G}(s);\frac{X}{\beta},\frac{Y}{\alpha}\right).
	\end{align*} 
	So, (\ref{Equ:Recipe}) is true when $e$ is a bridge. 
	Now let $e$ be neither a loop nor a bridge. 
	Then $k(G) = k(G\backslash e) = k(G/e)$
	and similarly as above we find (\ref{Equ:Recipe}) is true.
\end{proof}

\begin{ex}\label{Ex:WCP}
	If we take 
	$\Phi_{f}(G(s)) = \dfrac{\chi_{f}(G(s))}{\lambda^{k(G)}}$
	with $f \in \Hom_{d}(n)$, $X = \lambda -1$, $Y = 0$, $\alpha = 1$ and $\beta = -1$, 
	we immediately get the weighted chromatic polynomial.
	Moreover, Theorem~\ref{Thm:HarmRecipe} implies
	Theorem~\ref{Thm:RelationChromTutte} when $f \in \Harm_{d}(n)$.
\end{ex}

\section{Weighted invariants for graphs and matroids}\label{Sec:WeightedInv}

Tutte~\cite{Tutte1947} pointed out an idea of graph invariant which is known as 
Tutte--Grothendieck invariant (T--G invariant) for all functions that satisfy 
deletion-contraction recurrence was thoroughly developed and generalized 
by Brylawski~\cite{Brylawski}. 
Motivated by the concept of T--G invariant, in this section, 
we introduce the notion of weighted T--G invariant 
which is  also called harmonic T--G invariant in some 
particular cases. 

\subsection*{Weighted T--G invariants}

Let $G=(V,E)$ be a graph with $|E| = n$.
Let $s$ be a label of $G$.
We denote the set of all labels~$s$ of~$G$ by $S(G)$.
It is easy to compute that $\sharp S(G) = n!$. 
Now we define
\begin{equation}\label{Equ:PhiHat}
	\widehat\Phi_{f}(G)
	:=
	\sum_{s \in S(G)}
	\Phi_{f}(G(s)).
\end{equation}

\begin{rem}
	For $f = 1$, 
	$\widehat{\Phi}_{f}(G) = n! \Phi(G)$,
	where $\Phi(G)$ is the classical T--G invariant. 
\end{rem}

We denote by $S_{n}$ the symmetric group of order~$n$. 
Then the following result is immediate from the construction of $\widehat{\Phi}_{f}(G)$. 

\begin{thm}\label{Thm:PhiInv}
	Let $G = (V,E)$ be a graph with $|E| = n$. Then for any $f\in \Hom_d(n)$, 
	we have 
	\begin{itemize}
		\item [(i)]
		$\widehat{\Phi}_{f}(G)$ is an invariant for graphs. 
		
		\item [(ii)]
		Let $R(f) := \sum_{\sigma \in S_{n}} \sigma f$.
		Then $\Phi_{R(f)}(G)$ is an invariant for graphs,
		where $\Phi_{R(f)}(G)$ denotes the weighted T--G polynomial of $G$ 
		associated to $R(f)$ and independent of the choice of $s$.
		Moreover, $\widehat{\Phi}_{f}(G) = \Phi_{R(f)}(G)$.

		\item  [(iii)]
		Let $\sigma f = f$ for all $\sigma \in S_{n}$. 
		Then $\Phi_{f}(G)$ is an invariant for graphs. 
		Also, $\widehat{\Phi}_{f}(G) = n! \Phi_{f}(G)$.
	\end{itemize}
\end{thm}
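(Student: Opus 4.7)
The plan is to reduce everything to two facts: (a) $\Phi_f(G(s))$ depends linearly on $f$, and (b) changing the label $s$ corresponds to the permutation action on $f$. Once these are in hand, part~(ii) becomes a one-line computation, and parts~(i) and (iii) follow almost immediately.

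First I would observe that the recurrence of Definition~\ref{Def:TGpoly} admits, by repeated unfolding until only weight terms remain, a closed-form expansion
\[
	\Phi_f(G(s))
	\;=\;
	\sum_{A \subseteq E} c_A(X,Y,\alpha,\beta)\,\widetilde{f}(s(A)),
\]
in which the coefficients $c_A$ are polynomials in the formal variables depending on $G$ and $s$ but not on $f$. Consequently $\Phi_f(G(s))$ is linear in $f$, and the abstract operator $\circ$ is interpreted cleanly as set union inside $\widetilde{f}(\cdot)$. This step is routine but essential: it is what lets us add and rescale weight functions freely in the sequel.

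Next I would verify the key symmetry identity
\[
	\Phi_f(G(\sigma \circ s_0))
	\;=\;
	\Phi_{\sigma^{-1} f}(G(s_0))
	\qquad (\sigma \in S_n,\; s_0 \in S(G)).
\]
The action of $\sigma \in S_n$ on $\RR\Omega_d$ satisfies $(\sigma f)(Z) = f(\sigma^{-1}Z)$, which extends immediately to $\widetilde{\sigma f}(X) = \widetilde{f}(\sigma^{-1}X)$. Since the expansion from the previous step is driven entirely by values $\widetilde{f}(s(A))$, substituting $s \mapsto \sigma \circ s_0$ on one side and $f \mapsto \sigma^{-1} f$ on the other produces the same coefficients term by term. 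Fixing a base label $s_0 \in S(G)$, the map $\sigma \mapsto \sigma \circ s_0$ is a bijection $S_n \to S(G)$, so
\[
	\widehat{\Phi}_f(G)
	\;=\;
	\sum_{\sigma \in S_n} \Phi_f(G(\sigma \circ s_0))
	\;=\;
	\sum_{\sigma \in S_n} \Phi_{\sigma^{-1} f}(G(s_0))
	\;=\;
	\Phi_{R(f)}(G(s_0)),
\]
the last equality using linearity together with the reindexing $\sigma \mapsto \sigma^{-1}$ to rewrite $\sum_\sigma \sigma^{-1} f$ as $R(f)$. The right-hand side therefore has the same value for every choice of $s_0$, which proves (ii). Part (i) is then immediate since $\widehat{\Phi}_f(G)$ is by construction defined without reference to any particular label. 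For (iii), the hypothesis $\sigma f = f$ for every $\sigma$ gives $R(f) = n!\,f$, so linearity yields $\Phi_{R(f)}(G) = n!\,\Phi_f(G)$, forcing $\Phi_f(G)$ itself to be independent of $s_0$ and equal to $\widehat{\Phi}_f(G)/n!$.

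The main obstacle will be the rigorous justification of the symmetry identity, because the operator $\circ$ as defined is only valid when both operands come from the same weight function; attempting a naive induction directly on the recurrence of Definition~\ref{Def:TGpoly} would force us to track $\circ$-expressions whose shape changes under the substitution $f \mapsto \sigma^{-1} f$. The linear expansion obtained in the first step is exactly what sidesteps this difficulty, by reducing every $\circ$-operation to an ordinary union of subsets of $\Omega$ inside a single $\widetilde{f}$, after which the $S_n$-action can be transported freely between the label and the weight function.
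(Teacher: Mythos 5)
Your proof is correct. The paper offers no argument for this theorem beyond the preceding remark that it is ``immediate from the construction of $\widehat{\Phi}_{f}(G)$,'' so there is no written proof to compare against; your two observations --- that unfolding the deletion--contraction recurrence exhibits $\Phi_{f}(G(s))$ as a linear function of $f$ of the form $\sum_{A\subseteq E}c_{A}(X,Y,\alpha,\beta)\,\widetilde{f}(s(A))$, and that relabelling by $\sigma$ is equivalent to acting by $\sigma^{-1}$ on $f$ --- are precisely the content that makes the statement immediate, and your derivation of (i)--(iii) from them (via the bijection $\sigma\mapsto\sigma\circ s_{0}$ and the reindexing $\sum_{\sigma}\sigma^{-1}f=R(f)$) is sound.
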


We call $\widehat{\Phi}_{f}(G)$ the 
\emph{weighted Tutte--Grothendieck invariant} for graphs.
If $f \in \Harm_{d}(n)$, we call $\widehat{\Phi}_{f}(G)$ the 
\emph{harmonic Tutte--Grothendieck invariant} for graphs,
which is zero for $d \neq 0$.

\subsection*{Weighted Tutte invariants}

Let $M$ be a matroid with ground set $E$ of cardinality $n$.
Let $s$ be a label of $M$.
Define
\[
	\widehat{T}_{f}(M;x,y)
	:=
	\sum_{s\in S(M)} 
	T_{f}(M(s);x,y),
\]
where $S(M)$ interprets a similar meaning as $S(G)$ for the case of graphs.

\begin{rem}
	For $f = 1$, 
	$\widehat{T}_{f}(M;x,y) = n!T(M;x,y)$. 
\end{rem}

\begin{thm}\label{Thm:WeightedTutteInv}
	Let $M = (E,\I)$ be a matroid with $|E| = n$. Then for any $f\in \Hom_d(n)$, 
	we have 
	\begin{itemize}
		\item [(i)]
		$\widehat{T}_{f}(M)$ is an invariant for graphs. 
		
		\item [(ii)]
		Let $R(f) := \sum_{\sigma \in S_{n}} \sigma f$.
		Then $T_{R(f)}(M)$ is an invariant for matroids,
		where $T_{R(f)}(M)$ denotes the weighted Tutte polynomial of $M$ 
		associated to $R(f)$ and independent of the choice of label $s$.
		Moreover, $\widehat{T}_{f}(M) = T_{R(f)}(M)$.
		
		\item  [(iii)]
		Let $\sigma f = f$ for all $\sigma \in S_{n}$. 
		Then $T_{f}(M)$ is an invariant for matroids. 
		Also, $\widehat{T}_{f}(M) = n! T_{f}(M)$.
	\end{itemize}
\end{thm}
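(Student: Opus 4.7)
The plan is to follow the same template as the proof of Theorem~\ref{Thm:PhiInv} for graphs, since the weighted Tutte polynomial of a matroid is built from the identical combinatorial ingredients (a summation over subsets $A$ of the ground set with weights $\widetilde{f}(s(A))$) as the weighted T--G polynomial of a graph. The crucial tool is the natural linear action of the symmetric group $S_{n}$ on $\R \Omega_{d} = \Hom_{d}(n)$ given by $\sigma \cdot Z := \sigma(Z)$ on basis elements, which translates to $(\sigma f)(Z) = f(\sigma^{-1}(Z))$ for $f \in \Hom_{d}(n)$ and hence, by the definition~(\ref{Equ:TildeF}) of $\widetilde{f}$, to the relation $\widetilde{\sigma f}(J) = \widetilde{f}(\sigma^{-1}(J))$ for every $J \in 2^{\Omega}$. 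The second observation is that any two labels $s, s' \in S(M)$ differ by a unique permutation $\tau \in S_{n}$ in the sense that $s' = \tau \circ s$, so ranging $s$ over $S(M)$ is equivalent to ranging $\tau$ over $S_{n}$ once a reference label is fixed.

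To establish~(ii) I first show that $T_{R(f)}(M(s);x,y)$ is independent of~$s$. Applying the relation above to $R(f) = \sum_{\sigma \in S_{n}} \sigma f$ yields
\[
    \widetilde{R(f)}(J)
    =
    \sum_{\sigma \in S_{n}} \widetilde{f}(\sigma^{-1}(J)),
\]
which is manifestly invariant under any permutation of the elements of~$J$; in particular $\widetilde{R(f)}(\tau J) = \widetilde{R(f)}(J)$ for all $\tau \in S_{n}$. Since replacing $s$ by $\tau \circ s$ replaces $s(A)$ by $\tau s(A)$ in every summand of the weighted Tutte polynomial, label-independence of $T_{R(f)}(M)$ follows. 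The identity $\widehat{T}_{f}(M) = T_{R(f)}(M)$ is then obtained by fixing a reference label $s_{0}$, writing $s = \tau \circ s_{0}$ in the defining sum, and exchanging the order of summation over $\tau$ and over $A$:
\[
    \widehat{T}_{f}(M;x,y)
    =
    \sum_{A \subset E}
    \Bigl(\sum_{\tau \in S_{n}} \widetilde{f}(\tau s_{0}(A))\Bigr)
    (x-1)^{\rho(E)-\rho(A)}(y-1)^{|A|-\rho(A)},
\]
and the inner sum equals $\widetilde{R(f)}(s_{0}(A))$ after a change of variable $\sigma = \tau^{-1}$.

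Parts~(i) and~(iii) then follow quickly. For~(i), invariance of $\widehat{T}_{f}(M)$ under matroid isomorphism is immediate from~(ii), since $T_{R(f)}(M)$ is defined intrinsically from $R(f)$ and the rank function of $M$ and does not depend on any label. For~(iii), if $\sigma f = f$ for all $\sigma \in S_{n}$ then every label $s$ gives the same polynomial $T_{f}(M(s);x,y)$, so the defining sum collapses to $n!\, T_{f}(M;x,y)$; equivalently $R(f) = n! f$, matching the formula proved in~(ii). The only subtlety in the argument is keeping track of the direction in which the $S_{n}$-action translates along labels (which is what forces the change of variable $\sigma = \tau^{-1}$ in the computation above), but this is routine bookkeeping rather than a genuine obstacle, and the whole proof is essentially parallel to the graph case.
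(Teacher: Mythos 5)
Your argument is correct and is precisely the symmetrization argument the paper intends: the paper gives no written proof of this theorem (the parallel graph statement, Theorem~\ref{Thm:PhiInv}, is simply declared immediate from the construction of the invariant), and your computation $\sum_{\tau\in S_{n}}\widetilde{f}(\tau s_{0}(A))=\widetilde{R(f)}(s_{0}(A))$, together with the observation that $\widetilde{R(f)}$ is fixed by every permutation and hence label-independent, supplies exactly the details being left implicit. One cosmetic remark: in part (i) the word ``graphs'' in the statement is evidently a typo for ``matroids'', and your reduction of (i) and (iii) to (ii) handles the corrected statement correctly.
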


We call $\widehat{T}_{f}(M;x,y)$ the 
\emph{weighted Tutte invariant} for matroids.
If $f \in \Harm_{d}(n)$, we call $\widehat{T}_{f}(M;x,y)$ the 
\emph{harmonic Tutte invariant} for matroids which is zero for $d \neq 0$.

\begin{ex}
	Let $M_{1}$ be a vector matroid over $\FF_{2}$ 
	and $M_{2}$ be a vector matroid over $\FF_{3}$
	as given below:
	\[
		M_{1}
		=
		\begin{pmatrix}
			1 & 1 & 1 & 1 & 0 & 0 & 0\\
			1 & 1 & 0 & 0 & 1 & 1 & 0\\
			0 & 0 & 1 & 0 & 1 & 0 & 1
		\end{pmatrix},
		\quad
		M_{2}
		=
		\begin{pmatrix}
			2 & 1 & 0 & 1 & 0 & 1 & 2\\
			1 & 1 & 1 & 0 & 0 & 0 & 0\\
			0 & 0 & 0 & 0 & 1 & 1 & 2
		\end{pmatrix}.
	\]
We have checked numerically that 
for all $f\in \Hom_{d}(7)\ (1\leq d\leq 7)$, 
$\widehat{T}_{f}(M_{1},x,y) = \widehat{T}_{f}(M_{2},x,y)$. 
For the construction of vector matroids, we refer the readers to~\cite{Cameron}.

\end{ex}

\begin{prob}
Find a non-isomorphic matroid pair $(M_1, M_2)$ 
having the same Tutte polynomial but the 
different weighted Tutte invariants. 

\end{prob}

The following result gives a correspondence between
the weighted T--G invariant and the weighted Tutte invariant 
which is an analogue to Theorem~\ref{Thm:HarmRecipe}.

\begin{thm}\label{Thm:GenHarmRecipe}
	Let $\widehat\Phi_{f}$ be a weighted T--G invariant associated to 
	$f \in \Hom_{d}(n)$. Then for all graphs $G = (V,E)$ 
	we have
	\begin{equation}\label{Equ:TGInvTutte}
		\widehat\Phi_{f}(G)
		=
		\alpha^{|E|-|V|+k(G)}
		\beta^{|V|-k(G)}
		\widehat{T}_{f}
		\left(M_{G};\frac{X}{\beta},\frac{Y}{\alpha}\right),	
	\end{equation}
	where $X,Y,\alpha,\beta$ interpret the same meaning as in 
	weighted T--G polynomials associated to $f$.
\end{thm}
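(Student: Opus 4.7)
The plan is to reduce the statement to a single-label identity via $S_n$-symmetrization, and then prove it by induction mirroring the proof of Theorem~\ref{Thm:HarmRecipe}. By Theorem~\ref{Thm:PhiInv}(ii) and Theorem~\ref{Thm:WeightedTutteInv}(ii), we have $\widehat\Phi_f(G)=\Phi_{R(f)}(G)$ and $\widehat T_f(M_G;x,y)=T_{R(f)}(M_G;x,y)$ where $R(f):=\sum_{\sigma\in S_n}\sigma f$. Since $\sigma R(f)=R(f)$ for all $\sigma\in S_n$, parts (iii) of the same two theorems ensure that $\Phi_{R(f)}(G(s))$ and $T_{R(f)}(M_G(s))$ are label-independent. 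Hence it is enough to establish, for any fixed label $s$,
\[
\Phi_{R(f)}(G(s);X,Y,\alpha,\beta)=\alpha^{|E|-|V|+k(G)}\beta^{|V|-k(G)}\,T_{R(f)}\!\left(M_G(s);\tfrac{X}{\beta},\tfrac{Y}{\alpha}\right).
\]

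I would prove this fixed-label identity by induction on $|E|$. The base case $|E|=0$ is immediate, since both sides collapse to $\widetilde{R(f)}(\emptyset)$. For the inductive step, select an edge $e$ and split into the three cases of Definition~\ref{Def:TGpoly} and Proposition~\ref{Prop:TutteInduc} (loop, bridge, or neither). In each case, expand both sides using the corresponding recurrence, apply the inductive hypothesis to the smaller graphs $G\backslash e$ and $G/e$, and verify that the $\alpha^{|E|-|V|+k(G)}\beta^{|V|-k(G)}$-prefactor absorbs the change in $|E|,|V|,k(G)$, while the $\widetilde{R(f)}(s(e))\circ$-composition on the $\Phi$-side matches the $\widetilde{R(f)}(s(e))\circ$-composition inside $T_{R(f)}$ after the substitution $x=X/\beta$, $y=Y/\alpha$. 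The computation parallels that in the proof of Theorem~\ref{Thm:HarmRecipe}, working directly with the (generally non-harmonic) symmetric function $R(f)$ rather than a harmonic one.

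The step I expect to be the main obstacle is the bridge case: the $\Phi$-recurrence at a bridge (Def.~\ref{Def:TGpoly}(c)) is phrased entirely in terms of $G\backslash e$, whereas the $T$-recurrence at a coloop (Prop.~\ref{Prop:TutteInduc}(c)) is phrased entirely in terms of $M_G/e$. Reconciling them uses the standard matroid identity $M_G\backslash e=M_G/e$ for a bridge, together with the fact that $G\backslash e$ differs from $G/e$ by one extra isolated vertex, which accounts exactly for the shift $k(G\backslash e)=k(G)+1$ when passing between the two induction hypotheses. As an alternative, one could give a direct expansion-based argument in the spirit of the second proof of Theorem~\ref{Thm:RelationChromTutte}: expand $\Phi_{R(f)}(G)$ as a sum over subsets $A\subset E$, use Remark~\ref{Rem:rho_k} to convert the exponents $k(G_A)$ into $\rho(E)-\rho(A)$, and exploit the fact that $S_n$-invariance of $R(f)$ makes $\widetilde{R(f)}(A)$ a function of $|A|$ alone, so the two formal expansions can be compared term-by-term without the need for a harmonic substitution.
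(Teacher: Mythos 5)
Your overall strategy --- symmetrize over labels and then compare the two recursions --- is in the same spirit as the paper's proof, which consists essentially of the single observation that $\sum_{s}\widetilde{f}(s(A))\,t=\sum_{s}\widetilde{f}(s(E\setminus A))\,t$, fed into Proposition~\ref{Prop:TutteInduc} and Definition~\ref{Def:TGpoly}. But your main route, an induction ``mirroring the proof of Theorem~\ref{Thm:HarmRecipe}, working directly with the non-harmonic symmetric function $R(f)$,'' has a genuine gap, and you have located the obstacle in the wrong place. The difficulty is not the bridge case: it is that in Definition~\ref{Def:TGpoly} the factor $\widetilde{f}(s(e))\circ{}$ is attached to the \emph{deletion} branch, while in Proposition~\ref{Prop:TutteInduc} it is attached to the branch of subsets \emph{containing} $e$. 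Unwinding the recursions, $\Phi_{f}$ weights a subset $A$ by $\widetilde{f}(s(E\setminus A))$ while $T_{f}$ weights it by $\widetilde{f}(s(A))$. In Theorem~\ref{Thm:HarmRecipe} this mismatch is repaired by harmonicity: Remark~\ref{Rem:Himadri} gives $\widetilde{f}(E\setminus J)=(-1)^{d}\widetilde{f}(J)$, which is exactly what produces the auxiliary polynomial $P_{f}$ of Remark~\ref{Rem:HarmTG} and the sign $(-1)^{d}$. For your $R(f)$ no such identity is available: already in the ordinary-edge case the inductive step reduces to requiring $\widetilde{R(f)}(s(e))\circ T(M\backslash e)+T(M/e)=T(M\backslash e)+\widetilde{R(f)}(s(e))\circ T(M/e)$, which is not an identity. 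So the induction does not close as written.

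Your fallback --- expand both sides over subsets and use that $S_{n}$-invariance makes $\widetilde{R(f)}(A)$ a function of $|A|$ alone --- is the right direction and is closest to what the paper actually does, but it still does not immediately supply what is needed: one computes $\widetilde{R(f)}(J)=\binom{|J|}{d}\,d!\,(n-d)!\sum_{Z\in\Omega_{d}}f(Z)$, so $\widetilde{R(f)}(A)$ and $\widetilde{R(f)}(E\setminus A)$ differ by the ratio $\binom{|A|}{d}\big/\binom{n-|A|}{d}$ unless $\sum_{Z}f(Z)=0$. The complement-swap identity $\sum_{s}\widetilde{f}(s(A))=\sum_{s}\widetilde{f}(s(E\setminus A))$ that the paper invokes (and that your term-by-term comparison would also need) therefore requires justification beyond the bijectivity of $s$; you should either prove it in the generality claimed or isolate the hypothesis on $f$ under which it holds before the rest of your argument can go through.
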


\begin{proof}
	Let $A$ be any subset of $E$.
	Let $\widetilde{f}(s(A))t$ 	be the term in 
	$T_{f}(M_{G}(s))$ corresponding to the set $A$ and label $s$ of $M_{G}$.
	Since $s$ is a bijective map from $E$ to $\Omega$, 
	therefore, 
	$\sum_{s \in S(M_{G})} \widetilde{f}(s(A))t = \sum_{s \in S(M_{G})} \widetilde{f}(s(E\setminus A))t$. 
	This fact together with Proposition~\ref{Prop:TutteInduc} 
	and Definition~\ref{Def:TGpoly} implies~{(\ref{Equ:TGInvTutte})}.
\end{proof}

\subsection*{Weighted chromatic invariant}

Let $s$ be a label of a graph $G$ with $n$ edges. Then from Example~\ref{Ex:WCP}, 
if we take $\Phi_{f}(G(s)) = \dfrac{\chi_{f}(G(s))}{\lambda^{k(G)}}$
with $f \in \Hom_{d}(n)$, $X = \lambda -1$, $Y = 0$, $\alpha = 1$ and $\beta = -1$, 
we get from definition~(\ref{Equ:PhiHat}) that
\[
	\widehat{\chi}_{f}(G(s))
	=
	\sum_{s\in S(G)} 
	\chi_{f}(G(s)),
\]
which is from Theorem~\ref{Thm:PhiInv}, an invariant for graphs.
We call this invariant the \emph{weighted chromatic invariant}
for graphs.
When $f \in \Harm_{d}(n)$,
we call this invariant the 
\emph{harmonic chromatic invariant}
for graphs which is zero for $d \neq 0$.
Moreover, Theorem~\ref{Thm:GenHarmRecipe} 
yields therefore the following corollary.

\begin{cor}
	We have
	\[
		\widehat\chi_{f}(G;\lambda)
		=
		(-1)^{\rho(E)}
		\lambda^{k(G)}
		\widehat{T}_{f}(M_{G};1-\lambda,0).
	\]
\end{cor}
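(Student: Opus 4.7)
The plan is to derive the Corollary by direct specialization of Theorem~\ref{Thm:GenHarmRecipe}, using the identification of the weighted chromatic polynomial as a particular weighted Tutte--Grothendieck polynomial recorded in Example~\ref{Ex:WCP}. Concretely, I would take
\[
	\Phi_{f}(G(s);X,Y,\alpha,\beta) \;=\; \frac{\chi_{f}(G(s);\lambda)}{\lambda^{k(G)}},
\]
with the substitution $X = \lambda-1$, $Y = 0$, $\alpha = 1$, $\beta = -1$, which is precisely the setting noted in Example~\ref{Ex:WCP}.

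Next, I would pass from labelled to invariant versions. Since $k(G)$ is independent of the label $s$, summing the displayed identity over all $s \in S(G)$ and applying the definition~(\ref{Equ:PhiHat}) yields
\[
	\widehat{\Phi}_{f}(G) \;=\; \frac{\widehat{\chi}_{f}(G;\lambda)}{\lambda^{k(G)}}.
\]
Then I would invoke Theorem~\ref{Thm:GenHarmRecipe}, which equates the left-hand side with
\(
	\alpha^{|E|-|V|+k(G)}\beta^{|V|-k(G)} \widehat{T}_{f}\!\left(M_{G}; X/\beta, Y/\alpha\right).
\)
With the chosen parameters this simplifies: $\alpha^{|E|-|V|+k(G)} = 1$, and using $\rho(E) = |V|-k(G)$ from Remark~\ref{Rem:rho_k}, we get $\beta^{|V|-k(G)} = (-1)^{\rho(E)}$. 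Finally, $X/\beta = (\lambda-1)/(-1) = 1-\lambda$ and $Y/\alpha = 0$.

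Combining these ingredients, multiplying both sides by $\lambda^{k(G)}$, produces
\[
	\widehat{\chi}_{f}(G;\lambda) \;=\; (-1)^{\rho(E)}\lambda^{k(G)} \widehat{T}_{f}(M_{G}; 1-\lambda, 0),
\]
which is the desired identity. There is no real obstacle here: the proof is a mechanical substitution once Theorem~\ref{Thm:GenHarmRecipe} and Example~\ref{Ex:WCP} are in hand. The only point requiring attention is the correct tracking of the sign $(-1)^{|V|-k(G)}$ and its identification with $(-1)^{\rho(E)}$ via the matroid rank of $M_{G}$.
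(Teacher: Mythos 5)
Your proposal is correct and follows exactly the route the paper intends: the corollary is obtained by specializing Theorem~\ref{Thm:GenHarmRecipe} with the parameters $X=\lambda-1$, $Y=0$, $\alpha=1$, $\beta=-1$ of Example~\ref{Ex:WCP}, summing over labels via~(\ref{Equ:PhiHat}), and identifying $(-1)^{|V|-k(G)}$ with $(-1)^{\rho(E)}$ through Remark~\ref{Rem:rho_k}. The paper leaves this substitution implicit; your write-up simply makes it explicit.
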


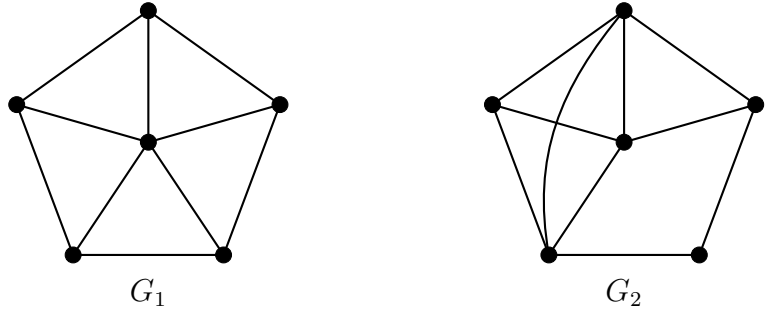
\begin{figure}[h]
	\begin{picture}(300,120)
		\put(10,10){
		\begin{tikzpicture}
			\draw[fill=black] (0,0) circle (3pt);
			\draw[fill=black] (2,0) circle (3pt);
			\draw[fill=black] (-0.75,2) circle (3pt);
			\draw[fill=black] (2.75,2) circle (3pt);
			\draw[fill=black] (1,3.25) circle (3pt);
			\draw[fill=black] (1,1.5) circle (3pt);
			\draw[thick] (0,0) -- (2,0) -- (2.75,2) -- (1,3.25) -- (-0.75,2) -- (0,0);
			\draw[thick] (0,0)-- (1,1.5);
			\draw[thick] (-0.75,2)-- (1,1.5);
			\draw[thick] (2,0)-- (1,1.5);
			\draw[thick] (2.75,2)-- (1,1.5);
			\draw[thick] (1,3.25)-- (1,1.5);
			\node at (1,-0.5) {$G_{1}$};
		\end{tikzpicture}}
		\put(190,10){
			\begin{tikzpicture}
				\draw[fill=black] (0,0) circle (3pt);
				\draw[fill=black] (2,0) circle (3pt);
				\draw[fill=black] (-0.75,2) circle (3pt);
				\draw[fill=black] (2.75,2) circle (3pt);
				\draw[fill=black] (1,3.25) circle (3pt);
				\draw[fill=black] (1,1.5) circle (3pt);
				\draw[thick] (0,0) -- (2,0) -- (2.75,2) -- (1,3.25) -- (-0.75,2) -- (0,0);
				\draw[thick] (0,0)-- (1,1.5);
				\draw[thick] (-0.75,2)-- (1,1.5);
				\draw[thick] (0,0) to[out=100, in=230] (1,3.25);
				\draw[thick] (2.75,2)-- (1,1.5);
				\draw[thick] (1,3.25)-- (1,1.5);
				\node at (1,-0.5) {$G_{2}$};
		\end{tikzpicture}}
	\end{picture}
\caption{Two non-isomorphic graphs}
\label{Fig:NonIsoGraph}
\end{figure}

\begin{ex}
	Let $G_{1}$ and $G_{2}$ be two non-isomorphic graphs 
	shown in Figure~\ref{Fig:NonIsoGraph} having the same 
	classical chromatic polynomials.
	Then we have $\Omega = \{1,2,\ldots,10\}$.
	Let $f=\sum_{X\in \Omega_{4}} X$. 
	Then the weighted chromatic invariant for $G_1$ is
	\[
		\widehat{\chi}_{f}(G_{1};\lambda)
		=
		10! 
		\times 
		( 210 \lambda^6 - 1260 \lambda^5 + 2975 \lambda^4 - 3450 \lambda^3 + 1960 \lambda^2 - 435 \lambda),
	\]
	whereas the weighted chromatic invariant for $G_2$ is
	\[
		\widehat{\chi}_{f}(G_{2};\lambda)
		=
		10! 
		\times 
		(210 \lambda^6 - 1260 \lambda^5
		  + 2975 \lambda^4 - 3434 \lambda^3 + 1925 \lambda^2 - 416 \lambda).
	\]
	Therefore, 
	$\widehat{\chi}_{f}(G_{1};\lambda) \neq \widehat{\chi}_{f}(G_{2};\lambda)$. 
	This concludes that the weighted chromatic invariants is stronger than 
	the classical chromatic polynomials.
\end{ex}

Now we recall~\cite{JuneHah} for the concept of log-concave sequences.
A sequence of real numbers
$a_0, a_1,...,a_n$  
is said to be \emph{log-concave} if 
$ a_{i-1} a_{i+1} \leq a_{i}^2$
for all
$0 < i < n$. 
Observing the above example, 
we may conclude the following conjecture.

\begin{conj}
	Let 
	$\widehat{\chi}_{f}(G;\lambda) = a_{n} \lambda^{n} - a_{n-1} \lambda^{n-1} + \cdots + (-1)^{n} a_{0}$ be a weighted chromatic invariant of a graph~$G$ associated to $f \in \Hom_{d}(n)$ such that $f$ is symmetric.
	Then the sequence $a_{0},a_{1},\ldots,a_{n}$ is log-concave. 
\end{conj}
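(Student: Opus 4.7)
The plan is to reduce the conjecture to a statement about sums of classical chromatic polynomials and then attempt a Hodge-theoretic argument for the resulting sum. First, since $S_n$ acts transitively on $\Omega_d$, the $S_n$-invariant subspace of $\Hom_d(n) = \RR\Omega_d$ is one-dimensional and spanned by $\mathbf{1}_d := \sum_{X \in \Omega_d} X$. A symmetric $f$ is therefore a scalar multiple $c\mathbf{1}_d$, so $\widetilde{f}(J) = c\binom{|J|}{d}$, and by Theorem~\ref{Thm:PhiInv}(iii) applied in the chromatic setting, $\widehat{\chi}_f(G;\lambda) = n!\,\chi_f(G;\lambda)$ is independent of the label $s$. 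Multiplication by the positive constant $n!\,c$ does not affect log-concavity, so the problem reduces to proving log-concavity of the signed-absolute-value coefficients of $\chi_{\mathbf{1}_d}(G;\lambda)$.

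Next, since $\binom{n-|A|}{d}$ counts the $d$-subsets of $E\setminus A$, swapping the order of summation in the definition of the weighted chromatic polynomial yields
\[
\chi_{\mathbf{1}_d}(G;\lambda) = \sum_{A\subset E}\binom{n-|A|}{d}(-1)^{|A|}\lambda^{k(G_A)} = \sum_{D \subset E,\, |D|=d} \chi(G\setminus D;\lambda),
\]
where the inner identification uses $k(G_A) = k((G\setminus D)_A)$ for $A \subset E\setminus D$. This expresses $\chi_{\mathbf{1}_d}(G;\lambda)$ as a non-negative integer combination of classical chromatic polynomials of spanning subgraphs of $G$. Each summand $\chi(G\setminus D;\lambda)$ has log-concave absolute-value coefficients by Huh's theorem~\cite{JuneHah}, so the target sequence $(a_0,\ldots,a_n)$ is a sum of log-concave sequences indexed by the $d$-subsets of $E$.

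The main obstacle is that log-concavity is \emph{not} preserved under arbitrary sums, so one must exploit the extra structure: every summand is the characteristic polynomial of a deletion minor of the single matroid $M_G$. I see two natural routes. Route (a) would construct an auxiliary matroid or polymatroid from $G$ and the parameter $d$ whose characteristic polynomial is (a positive multiple of) $\chi_{\mathbf{1}_d}(G;\lambda)$, and then apply the Hodge-theoretic log-concavity for characteristic polynomials of matroids directly. Route (b) would realize the coefficients as mixed multiplicities or intersection numbers in a suitable Chow ring associated to $M_G$ and the extra datum of a $d$-element subset, and then deduce log-concavity from the Hodge--Riemann relations, e.g.\ via the theory of Lorentzian polynomials. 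Both routes reduce to the same core task: finding the correct ambient combinatorial-algebraic object that coherently encodes the whole family $\{\chi(G\setminus D;\lambda)\}_{|D|=d}$. The hard part is to identify this object; further numerical verification on families such as complete graphs, cycles, and complete bipartite graphs should help narrow down the candidate constructions and suggest the right Chow-theoretic model.
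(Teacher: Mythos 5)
First, note that the paper itself offers no proof of this statement: it is presented as a conjecture, motivated only by the single numerical example preceding it. So there is nothing in the paper to compare your argument against; your proposal has to stand or fall on its own.

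Your reduction steps are correct and genuinely useful. Symmetry does force $f = c\,\mathbf{1}_d$ with $\mathbf{1}_d = \sum_{X\in\Omega_d}X$, hence $\widetilde{f}(J) = c\binom{|J|}{d}$, and Theorem~\ref{Thm:PhiInv}(iii) reduces the invariant to $n!\,c\,\chi_{\mathbf{1}_d}(G;\lambda)$. The exchange-of-summation identity
\[
\chi_{\mathbf{1}_d}(G;\lambda) \;=\; \sum_{A\subset E}\binom{n-|A|}{d}(-1)^{|A|}\lambda^{k(G_A)} \;=\; \sum_{\substack{D\subset E\\ |D|=d}}\chi(G\setminus D;\lambda)
\]
is also correct (the spanning subgraphs $(G\setminus D)_A$ and $G_A$ coincide for $A\subset E\setminus D$, and a fixed $A$ is counted $\binom{n-|A|}{d}$ times), and since every summand is a chromatic polynomial on the same vertex set, the signs align and the target sequence $(a_i)$ is indeed the coordinatewise sum of the absolute-coefficient sequences of the $\chi(G\setminus D;\lambda)$. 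This already says more than the paper does, and it would make a worthwhile remark in its own right.

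However, the proposal is not a proof, and the gap is exactly where you flag it: log-concavity is not closed under addition, Huh's theorem~\cite{JuneHah} applied to each summand therefore gives you nothing about the sum, and neither Route (a) nor Route (b) is carried out. Route (a) as stated is only a wish: there is no candidate matroid or polymatroid whose characteristic polynomial equals $\sum_{|D|=d}\chi(G\setminus D;\lambda)$, and such sums over deletion minors are not in general characteristic polynomials of a single matroid (the leading coefficient is $\binom{n}{d}$ rather than $1$, so at best one needs a normalization, and even then no construction is proposed). Route (b) likewise names the desired machinery (Lorentzian polynomials, Hodge--Riemann relations) without identifying the object to which it would apply. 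So the argument stops precisely at the hard step the conjecture is about. To make progress you would need either an explicit algebro-geometric or Lorentzian model for the whole family $\{\chi(G\setminus D;\lambda)\}_{|D|=d}$, or a combinatorial argument tailored to these particular sums; absent that, what you have is a correct reformulation of the conjecture, not a resolution of it.
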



\section{Homological categorifications of weighted polynomials}
Originated from category theory,
a \textit{categorification} in homology theory usually means 
to categorify 
a quantity or a polynomial 
with some homology groups.
For example, Khovanov homology 
is a categorification of 
Jone's polynomial,
Floer homology is 
a categorification of 
Alexander-Conway polynomial,
and
Magnitude homology is a categorification of
magnitude. 

In this section, we apply the method in \cite{categorification1}
and \cite{categorification2}
to construct  chain complexes and give 
categorifications to the 
weighted chromatic polynomial 
and weighted Tutte polynomial,
respectively. 
We omit some basic facts of algebraic topology
and homological algebra.
For  details, the readers may refer \cite{Munkres}
and \cite{Rotman}.

We categorify the 
weighted polynomial
by assigning 
a real-valued  weight 
to the specially constructed chain complexes,
and show the relationship between the chain complexes and
our invariant in Section~\ref{Sec:WeightedInv}.
We will write another paper
to discuss the general weighted chain complex,
and discuss only special cases in this paper.

\subsection{A categorification of weighted chromatic polynomials}

\label{Sec:CateHarmChrom}

Let $\mathcal{M}$ be a  free $\mathbb{Z}$-module with
$\mathcal{M}= M_0 \oplus M_1$, 
where  $M_0$ and $M_1$ are subgroups of $\mathcal{M}$
and are generated by basis element $1$
and $x$, respectively.
Hence $\mathcal{M}$ can be considered as 
 a graded free $\mathbb{Z}$-module
with non-trivial elements only at 
degree $0$ and $1$.

\begin{df}
Let $G(s)=(V, E)$ be a labelled graph 
with $n$ edges.
We use a vector 
$\epsilon=(\epsilon_1, \epsilon_2, \ldots, \epsilon_n) \in \{0,1\}^n$
 to
represent any given
 subgraph $G(s)'$ of $G(s)$ whose vertices 
are $V$.
Precisely, 
for $i=1, 2, \ldots, n,$ 
if the $i$-th edge exists in $G(s)'$, 
then we let $\epsilon_i= 1$;
and $\epsilon_i= 0$ 
otherwise.
We call  $\epsilon$ the 
\textit{edge vector } of $G(s)$.
Given any edge vector, we
also denote the corresponding subgraph
$G_\epsilon.$
We denote by $l(\epsilon)$ the number of $1$s in edge vector 
$\epsilon.$
Let $\mathcal{E}^a (G(s))$ be the set of all 
edge vectors $\epsilon$ with $l(\epsilon)=a$,
$a\in \mathbb{N}$.
\end{df}

It is easy to check that 
Definition $7.1$ is well-defined, 
and the set of all subgraphs of $G$ whose vertices are $V$ is
on 1-1 correspondence to the set of all edge vectors.

In order to construct a cochain complex with $M$,
we first define the cochain group as follows.
\begin{df}
The \textit{$q$-th chromatic cochain group} of a graph $G$ is defined as 
$$C^q(G(s)):= \bigoplus_{l(\epsilon)=q}  M^{\otimes k(G_\epsilon)},$$
where $\otimes$ denotes tensor product.
\end{df}

We also write 
$$C^q(G(s))= \bigoplus_{j\geq 0} C^{q,j}(G(s)),$$
where 
$C^{q,j}(G(s))$ denotes
the elements of degree $j$ of $C^q(G(s)).$ 
Hence $C^q(G(s)) $ can also be considered
as a bigraded module.

Let $f: 2^{\Omega} \to \mathbb{R}$
be a discrete function of degree~$d$.
Since each 
$M^{\otimes k(G_\epsilon)}$
 corresponds to 
an edge vector $\epsilon$, 
by the definition,
 it also corresponds to 
 an $f$-value. 
Therefore, 
besides the dimension of the graded module, 
we can assign a weighted dimension to 
the graded module, where the weight is given 
by $f$-values.
We begin with the definition of weighted rank as follows.

\begin{df}
Let $G$ be a $\mathbb{Z}$-module
with decomposition 
$G= \bigoplus_i G_i$.
We suppose that for each $G_i$, there is a corresponding 
$f$-value.
Then we define the \textit{$f$-\rm{rank}} of $G$ as  
$$f\mathrm{rank}(G):= \sum_i f(G_i) \cdot \dim_{\mathbb{Q}}(G_i \otimes \mathbb{Q}).$$

\end{df}

Note that $\dim_{\mathbb{Q}}(G_i \otimes \mathbb{Q})$
is known as the 
\textit{rank} of $G_i$.

\begin{df}
Let $\mathcal{N}= \bigoplus_{i} {N_i}$ be a 
graded $\mathbb{Z}$-module
with an $f$-value $f(\mathcal{N})$,
where ${N_i}$ denotes the
set of homogeneous elements of degree $i$.
The \textit{graded dimension} of $\mathcal{N}$
is defined as 
$$q\mathrm{dim}(\mathcal{N}):= \sum_{i} q^i \cdot \mathrm{rank}(N_i),$$
and the \textit{$f$-value graded dimension 
of $\mathcal{N}$}
is defined as
$$fq\mathrm{dim}(\mathcal{N}):= \sum_{i} q^i \cdot f\mathrm{rank}(N_i).$$

\end{df}

\begin{lem}
Let $\mathcal{N}$, $\mathcal{G}$, and $\mathcal{H}$
be graded modules 
with $f$-values and 
$\mathcal{N}= \mathcal{G} \oplus \mathcal{H}$.
Then 
$$ fq\mathrm{dim}(\mathcal{N}) = fq\mathrm{dim}(\mathcal{G}) + fq\mathrm{dim}(\mathcal{H}).$$

\end{lem}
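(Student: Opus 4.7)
The plan is to reduce the statement to additivity of rank at each graded level and then to sum against $q^i$. Because $\mathcal{N}$, $\mathcal{G}$, and $\mathcal{H}$ are graded $\mathbb{Z}$-modules and the isomorphism $\mathcal{N}\cong \mathcal{G}\oplus\mathcal{H}$ is assumed to be a decomposition of graded modules, the homogeneous component of degree $i$ of $\mathcal{N}$ decomposes as $N_i = G_i \oplus H_i$, where $G_i$, $H_i$ denote the homogeneous components of degree $i$ of $\mathcal{G}$ and $\mathcal{H}$. Thus the proof splits into two routine parts: additivity of $f\mathrm{rank}$ on each degree, and the assembly step that multiplies by $q^i$ and sums.

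First I would record that tensoring with $\mathbb{Q}$ is exact, so $\dim_{\mathbb{Q}}((G_i\oplus H_i)\otimes \mathbb{Q}) = \dim_{\mathbb{Q}}(G_i\otimes \mathbb{Q}) + \dim_{\mathbb{Q}}(H_i\otimes \mathbb{Q})$, that is, ordinary rank is additive under direct sum. Next I would unpack Definition of $f\mathrm{rank}$: the data carried by $\mathcal{G}$ and $\mathcal{H}$ is a further decomposition $G_i = \bigoplus_\alpha G_{i,\alpha}$ and $H_i = \bigoplus_\beta H_{i,\beta}$ together with prescribed $f$-values $f(G_{i,\alpha})$, $f(H_{i,\beta})$. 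The decomposition inherited by $N_i$ is simply the disjoint concatenation of these indexed summands, with the same $f$-values. Substituting into the definition gives
\[
f\mathrm{rank}(N_i) = \sum_\alpha f(G_{i,\alpha})\dim_{\mathbb{Q}}(G_{i,\alpha}\otimes\mathbb{Q}) + \sum_\beta f(H_{i,\beta})\dim_{\mathbb{Q}}(H_{i,\beta}\otimes\mathbb{Q}) = f\mathrm{rank}(G_i) + f\mathrm{rank}(H_i).
\]

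Finally, multiplying by $q^i$ and summing over $i\in \mathbb{Z}_{\geq 0}$ yields
\[
fq\mathrm{dim}(\mathcal{N}) = \sum_i q^i \cdot f\mathrm{rank}(N_i) = \sum_i q^i \cdot f\mathrm{rank}(G_i) + \sum_i q^i \cdot f\mathrm{rank}(H_i) = fq\mathrm{dim}(\mathcal{G}) + fq\mathrm{dim}(\mathcal{H}),
\]
which is the claim. The only substantive point, and the one I would be careful to spell out, is the bookkeeping in the middle paragraph: one must check that the ``$f$-value decomposition'' of $\mathcal{N}$ is genuinely inherited from those of $\mathcal{G}$ and $\mathcal{H}$ (so that no $f$-values are lost, merged, or double-counted). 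Since the graded modules in the paper arise from the explicit formula $C^q(G(s)) = \bigoplus_{l(\epsilon)=q} M^{\otimes k(G_\epsilon)}$, in which every summand is tagged by its own edge vector and hence by its own $f$-value, this bookkeeping is automatic, and no additional hypothesis is needed.
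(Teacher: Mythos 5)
Your proof is correct and follows essentially the same route as the paper: decompose each homogeneous component as $N_i = G_i \oplus H_i$, use additivity of $f\mathrm{rank}$ (via additivity of $\dim_{\mathbb{Q}}(\,\cdot\,\otimes\mathbb{Q})$ on direct sums), and sum against $q^i$. Your extra care about how the $f$-value decomposition of $\mathcal{N}$ is inherited from those of $\mathcal{G}$ and $\mathcal{H}$ is a welcome clarification of a point the paper's proof leaves implicit, but it does not change the argument.
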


\begin{proof}
\begin{equation*}
\begin{split}
fq\mathrm{dim}(\mathcal{N})&= \sum_{i} q^i \cdot f\mathrm{rank}(N_i) \\
							&= \sum_{i} q^i \cdot f\mathrm{rank}(G_i \oplus H_i) \\
&= \sum_{i} q^i \cdot f(G_i)\cdot \mathrm{rank}(G_i) + f(H_i)\cdot \mathrm{rank}(H_i)) \\
&= \sum_{i} q^i \cdot f(G_i)\cdot \mathrm{rank}(G_i) +
\sum_{i} q^i \cdot f(H_i)\cdot \mathrm{rank}(H_i)		\\
&=fq\mathrm{dim}(\mathcal{G})	+ fq\mathrm{dim}(\mathcal{H}), \\
\end{split}							
\end{equation*}
where $\mathrm{rank}(G_i)= \dim_{\mathbb{Q}}(G_i \otimes \mathbb{Q}).$
\end{proof}

According to
 the above lemma,
 we can calculate as an example
that 
for two graded module $\mathcal{M}$ defined  above,
suppose that they correspond to $f$-values $a_1$ and
$a_2$,
then
$fq\mathrm{dim}(\mathcal{M}\oplus \mathcal{M})= (a_1+ a_2)(1+q)$,
and
$fq\mathrm{dim}(\mathcal{M}^{\otimes m}\oplus \mathcal{M}^{\otimes n})= a_1(1+q)^m
+a_2(1+q)^n$.

The desired chain complex 
is constructed by the cochain group
above and a \textit{coboundary operator}.
A coboundary operator 
is a 
module homomorphism 
between two cochain groups
whose dimensions differ by $1$.
By the correspondence between cochain 
groups and edge vectors,
the coboundary operator is induced by a 
map that increases exactly one edge.

We define the coboundary operator 
by defining a map for its each element module,
and then linearly extend them.
Let
$d_{\epsilon}: \mathcal{E}^a(G(s)) \to   \mathcal{E}^{a+1}(G(s)) $
be a map 
that is identity on all elements but 
changes exact one $0$ to $1$.
For an edge $\tilde{e}$, if adding the edge
 does not change the
number of connected components,
then we define the element module 
map
$\tilde{d_{\epsilon}}: M^{\otimes k} \to M^{\otimes k}$
to be 
the identity map.
On the other hand, 
if the added edge $\tilde{e}$   
joins two connected components,
that is, the number of connected components decreases
by one, 
then 
we define the element module 
map
$\tilde{d_{\epsilon}}: M^{\otimes k} \to M^{\otimes k-1}$
to be 
identity on the tensor elements 
which are 
irrelevant to this process.
Moreover, for the two  components 
 that are connected,
 the map  is defined to be 
$M \otimes M \to M$
with 
$m(1\otimes 1)=1 $, 
$m(1\otimes x)=m(x\otimes 1)= x$,
and 
$m(x\otimes x)=0$.

\begin{df}[coboundary]

The coboundary operator 
$d^q: C^q(G(s)) \to C^{q+1}(G(s))$
 is defined as
$$d^q:= \sum_{l(\epsilon)=q} (-1)^ {n(\epsilon)} \tilde{d}_{\epsilon},$$
where 
$ n(\epsilon)=\sum_{i=1}^{k_0} \epsilon_i $,
and $k_0$ is the index of the new-added edge in 
the given order.
\end{df}

By definition and simple calculation,
 it is uncomplicated to verify the following statements.
\begin{lem}
\label{lemma7d}
The following statements are true.
\begin{enumerate}

\item $d$ is degree preserving.

\item $d\circ d=0.$
\end{enumerate}
\end{lem}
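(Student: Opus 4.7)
The plan is to prove the two assertions separately, in each case reducing to a verification on generators and then extending by linearity.

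For (1), the first observation is that each summand map $\tilde{d}_\epsilon$ is, by construction, either the identity (when adding the new edge leaves the number of connected components unchanged) or has the form $m \otimes \mathrm{id}^{\otimes (k-2)}$, where $m \colon \mathcal{M} \otimes \mathcal{M} \to \mathcal{M}$ acts on the two tensor factors corresponding to the components being merged. The identity is trivially degree preserving, and for $m$ a direct inspection on the four basis elements $1\otimes 1,\ 1\otimes x,\ x\otimes 1,\ x\otimes x$ confirms that their images $1,\ x,\ x,\ 0$ have the correct internal degrees under the additive grading on $\mathcal{M}\otimes \mathcal{M}$. Hence each $\tilde{d}_\epsilon$ preserves the internal ($j$-)degree, and so does the signed sum $d^q$.

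For (2), the approach is the standard anticommutative-face argument familiar from Khovanov-type cube complexes. The composition $d^{q+1}\circ d^q$ decomposes as a sum over the two-dimensional faces of the cube $\{0,1\}^n$: each face is specified by a pair of coordinates $i<j$ that are zero in some edge vector $\epsilon$, and it supplies two paths, ``add $i$ then $j$'' and ``add $j$ then $i$'', from $\epsilon$ to the vector obtained by setting $\epsilon_i=\epsilon_j=1$. It is enough to show that each such face contributes zero.

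The sign computation is direct from the formula $n(\epsilon)=\sum_{k=1}^{k_0}\epsilon_k$: along the path $i\to j$ the prefix sum at position $j$ gains an extra $1$ after the edge at position $i<j$ is turned on, whereas along the path $j\to i$ the prefix sum at position $i$ is unaffected by turning on position $j$. The two total signs therefore differ by exactly $-1$, and it remains to show that the underlying unsigned maps coincide on the two paths.

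The coincidence of the unsigned maps is a short case analysis on whether each of the two edges merges components in the intermediate graph. The easy cases reduce either to $\mathrm{id}\circ\mathrm{id}$ or to commutations of the form $(m\otimes\mathrm{id})\circ\mathrm{id}=\mathrm{id}\circ(m\otimes\mathrm{id})$ acting on disjoint tensor factors. The only essential subcase is when both edges merge components on both paths; here one must invoke $m\circ (m\otimes\mathrm{id})=m\circ(\mathrm{id}\otimes m)$, which is precisely the associativity of the algebra $\mathcal{M}\cong\mathbb{Z}[x]/(x^2)$. I expect the main obstacle to be the bookkeeping: one must consistently identify the tensor factors of $\mathcal{M}^{\otimes k(G_\epsilon)}$ with the connected components of $G_\epsilon$ along both paths, in particular when one intermediate step creates a configuration where the surviving edge becomes a loop. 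Fixing a total order on the vertex set and indexing components by their minimum vertex renders these identifications canonical, after which the case analysis goes through cleanly and each face contributes $0$, yielding $d\circ d=0$.
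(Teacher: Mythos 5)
Your proof is correct and complete. The paper itself gives no argument for this lemma: it merely asserts that the statements hold ``by definition and simple calculation,'' implicitly deferring to the cited work of Helme-Guizon and Rong. Your verification --- checking degree preservation of the multiplication $m$ on the four basis tensors for (1), and for (2) decomposing $d\circ d$ over the two-dimensional faces of the cube $\{0,1\}^n$, computing from $n(\epsilon)=\sum_{k=1}^{k_0}\epsilon_k$ that the two paths around a face carry opposite signs, and matching the unsigned maps via disjointness or the associativity of $\mathbb{Z}[x]/(x^2)$ (including the case where the second edge becomes a cycle edge after the first merge) --- is precisely the standard argument that fills in the omitted details.
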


(2) of Lemma \ref{lemma7d} implies that 
$(C^{\star}, d^{\star})$
forms a cochain complex.
We denote by $H^{\star}(G):= \mathrm{ker}(d^{q})/\mathrm{im}(d^{q-1}),$
the corresponding cohomology,
where 
$\mathrm{ker}(d^{q})$ denotes the kernel of $d^{q}$, 
and 
$\mathrm{im}(d^{q-1}) $  
denotes the image of $d^{q-1}$.
It is shown in \cite{categorification1}
that, 
the  cochain complex does not 
depend on the choice of the order $s$.

\begin{df}
The \textit{graded Euler number} of a
cochain complex
$(C^{\star}, d^{\star})$
is
defined as 
$Euler_i(C):= \sum_i (-1)^i \cdot q\mathrm{dim}(H^i).$
Also, the \textit{weighted graded Euler number} corresponding 
to a function $f$ of $(C^{\star}, d^{\star})$
is defined as 
$wEuler(C)_i:= \sum_i (-1)^i \cdot fq\mathrm{dim}(H^i).$
\end{df}

By (1) of Lemma \ref{lemma7d},
the coboundary operator $d$ 
is degree preserving,
hence
the (weighted) graded Euler number is also 
equal to the alternative sum 
of the ($f$)$q$dim of cochain groups,
those are 
$Euler_i(C)= \sum_i (-1)^i q\mathrm{dim}(C^i),$
and
$wEuler_i(C)= \sum_i (-1)^i fq\mathrm{dim}(C^i).$

When $f$ is a discrete harmonic function
it follows from the definition of  
harmonic polynomial, we have the 
following relationship between 
harmonic polynomial and the chain
complex constructed.
This relationship gives a weighted categorification 
to harmonic chromatic polynomials.
\begin{thm}
\label{Thm:ChromaticCate}
Let  $G(s)=(V,E)$ be a labelled graph and 
$f$ a discrete
harmonic function on $E$, 
then
$$\chi_{f}(G(s))=\sum_i (-1)^{i+1} \cdot fq\mathrm{dim}(C^i(G(s))).	$$
Also,
$$\widehat{\chi}_{f}(G)=
\sum_{s} \sum_i (-1)^{i+1}  \cdot fq\mathrm{dim}(C^i(G(s))).$$
\end{thm}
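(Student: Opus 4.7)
The plan is to unfold the right-hand side directly at the cochain level and recognize the resulting alternating sum as the defining expression for $\chi_f(G(s);\lambda)$ after substituting $\lambda = 1+q$. The second identity then follows by summing over labels $s \in S(G)$ and invoking the definition of $\widehat{\chi}_f(G)$ from Section~\ref{Sec:WeightedInv}.

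First, I would unpack $fq\mathrm{dim}(C^i(G(s)))$ from the decomposition $C^i(G(s)) = \bigoplus_{l(\epsilon)=i} M^{\otimes k(G_\epsilon)}$. Since $q\mathrm{dim}(M)=1+q$ and graded dimension is multiplicative on tensor products, the $q$-graded dimension of the summand indexed by $\epsilon$ is $(1+q)^{k(G_\epsilon)}$, while its attached $f$-value is, by the convention implicit in the construction, the scalar $\widetilde{f}(s(E \setminus A_\epsilon))$, where $A_\epsilon \subset E$ is the edge set encoded by $\epsilon$. The additivity of $fq\mathrm{dim}$ on direct sums (the lemma stated just before the construction of $C^\star$) then gives
\begin{equation*}
fq\mathrm{dim}(C^i(G(s))) = \sum_{\substack{A \subset E \\ |A|=i}} \widetilde{f}(s(E\setminus A))\,(1+q)^{k(G_A)}.
\end{equation*}
Taking the alternating sum over $i$ collapses this to a single sum over all subsets $A \subset E$, which under $\lambda = 1+q$ reproduces the defining formula $\chi_f(G(s);\lambda) = \sum_{A \subset E}(-1)^{|A|}\widetilde{f}(s(E\setminus A))\lambda^{k(G_A)}$, up to a global sign absorbed into the convention for $\chi_f(G(s))$ in the statement. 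This establishes the first identity, and the second follows by summation over $s \in S(G)$ and the definition of the weighted chromatic invariant in Example~\ref{Ex:WCP}.

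To read the same identity in terms of the cohomology, I would invoke Lemma~\ref{lemma7d}(1): since $d$ is degree-preserving, $(C^\star,d^\star)$ splits along the internal $j$-grading into chain complexes of finitely generated free abelian groups, and the standard additivity of rank along short exact sequences shows that the weighted graded Euler characteristic is unchanged under passage to cohomology. The main obstacle I anticipate is the bookkeeping of signs and weights: the sign $(-1)^{i+1}$ in the statement must be reconciled with the sign $(-1)^{|A|}$ in the defining sum for $\chi_f$, and the choice of $\widetilde{f}(s(E\setminus A))$ versus $\widetilde{f}(s(A))$ as the $f$-value on the summand indexed by $\epsilon$ produces, by Remark~\ref{Rem:BachocLem}, a global factor $(-1)^d$. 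Harmonicity of $f$ enters only to guarantee that these two natural conventions for the attached weight agree up to this uniform sign; the remainder of the argument is purely formal combinatorial bookkeeping.
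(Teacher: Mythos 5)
Your proposal is correct and follows the same route the paper (implicitly) takes: the paper offers no written proof of Theorem~\ref{Thm:ChromaticCate} beyond the assertion that it ``follows from the definition,'' and the intended argument is exactly your direct expansion --- additivity of $fq\mathrm{dim}$ over the summands $M^{\otimes k(G_\epsilon)}$, the identity $q\mathrm{dim}(M^{\otimes k})=(1+q)^{k}$, and the substitution $\lambda=1+q$ collapsing the alternating sum over $i$ into the subset expansion defining $\chi_{f}$, with Lemma~\ref{lemma7d}(1) covering the passage between cochain groups and cohomology. One caveat worth recording: the paper's own worked example attaches the weight $\widetilde{f}(s(A_\epsilon))$ (the edges \emph{present}) to the summand indexed by $\epsilon$, not $\widetilde{f}(s(E\setminus A_\epsilon))$ as you assume; as you correctly note via Remark~\ref{Rem:BachocLem}, the two conventions differ by the global factor $(-1)^{d}$ --- which is exactly where harmonicity is used --- and this leaves a residual sign $(-1)^{d+1}$ in the stated identity that happens to be invisible in the paper's degree-one example, so your honest bookkeeping here exposes an imprecision in the theorem's statement rather than a gap in your argument.
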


\begin{ex}
We calculate the weighted 
chain complex and the weighted 
chromatic polynomial of graph $G$ as an example.
Let $G=(V,E)$
be the following graph, 
and 
$f:E\to \mathbb{R}$
be a discrete harmonic function with degree $1$ on $E$.
The $f$-values of each edges are 
$a_1, a_2, a_3,$ 
and $a_4$
as follows.
The label is denoted by $s$.

Thus the sequence of chromatic cochain group is
$$0\to C^0(G(s))\to C^1(G(s)) \to C^2(G(s))\to C^3(G(s)) \to C^4(G(s))\to 0. $$
By definition, 
\begin{align*}
C^0 &\cong M^{\otimes 4};\\
C^1 &\cong M^{\otimes 3}\oplus M^{\otimes 3} \oplus M^{\otimes 3} \oplus M^{\otimes 3};\\
C^2 &\cong M^{\otimes 2} \oplus M^{\otimes 2} \oplus M^{\otimes 2} \oplus M^{\otimes 2} \oplus M^{\otimes 2} \oplus M^{\otimes 2};\\
C^3 &\cong M^{\otimes 2} \oplus M \oplus M  \oplus M;\\
C^4 &\cong M.
\end{align*}

Then, we calculate the 
$fq$dim of $C^{i}(G(s))$ for
$i=0,1,2,3,4.$

\begin{align*}
fq\mathrm{dim}(C^0)&= 0\cdot (1+q)^4 =0 ;\\
fq\mathrm{dim}(C^1)&= a_1(1+q)^3+a_2(1+q)^3+a_3(1+q)^3+a_4(1+q)^3\\
 &=
(a_1+a_2+a_3+a_4)(1+q)^3=0 ;\\
fq\mathrm{dim}(C^2)&=(a_1+a_2)(1+q)^2
+(a_1+a_3)(1+q)^2+(a_1+a_4)(1+q)^2\\
&+(a_2+a_3)(1+q)^2+(a_2+a_4)(1+q)^2
+ (a_3+a_4)(1+q)^2\\
&=3(a_1+a_2+a_3+a_4)(1+q)^2=0;\\
fq\mathrm{dim}(C^3)&=(a_1+a_2+a_3)(1+q)^2
+(a_1+a_2+a_4)(1+q)\\
&+(a_1+a_3+a_4)(1+q)+(a_2+a_3+a_4)(1+q)\\
&=-a_4(1+q)^2+a_4(1+q);\\
fq\mathrm{dim}(C^4)&= (a_1+a_2+a_3+a_4)(1+q)^4 =0.
\end{align*}
Note that since $f$ is a discrete harmonic function, 
$(a_1+a_2+a_3+a_4)=0$, 
and $(a_1+a_2+a_3)=-a_4$.

We suppose that
$\lambda=1+q$,
then the weighted graded Euler number of $C^{\bullet}(G(s))$
is $a_4\lambda^2-a_4\lambda$,
which coincidences to the opposite of the
weighted chromatic polynomial $\chi_{f}(G(s))$  that is 
calculated in Example 3.1.

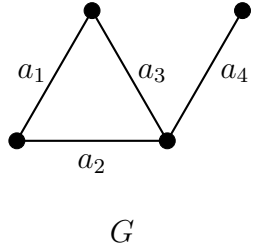
\begin{figure}[h]

	\begin{picture}(120,120)
		\put(10,10){
		\begin{tikzpicture}
			\draw[fill=black] (0,1.732) circle (3pt);
			\draw[fill=black] (1,0) circle (3pt);
			\draw[fill=black] (-1,0) circle (3pt);
			\draw[fill=black] (2,1.732) circle (3pt);
			\draw[thick] (1,0) -- (-1,0) -- (0,1.732)-- (1,0);
			\draw[thick] (1,0)-- (2,1.732);
			\node at (0,-0.3) {$a_2$};			
			\node at (-0.8,0.9) {$a_1$};
			\node at (0.8,0.9) {$a_3$};
			\node at (1.9,0.9) {$a_4$};
			
			\node at (0.4,-1.2) {$G$};
		\end{tikzpicture}}
		
		\end{picture}
		
\caption{An example of harmonic chromatic chain complex}		
\end{figure}

\end{ex}
\subsection{A categorification of weighted Tutte polynomials}

\label{Sec:CateHarmTutte}

In this subsection, we apply the method in \cite{categorification2}
to give a categorification to the 
Harmonic Tutte Polynomial defined in Section \ref{Sec:WeightedTutte}.
Similar to the last subsection, we give a  construction of a cochain complex
and calculate its weighted alternative sum to categorify the 
harmonic Tutte polynomial.
We omit some proofs in \cite{categorification2}.
We use the notation $TC^{\bullet}(G)$ and boundary operator
$td^{\bullet}$ in this section 
to distinguish the notations in this subsection and the last subsection. 

The cochain group
corresponding to the harmonic 
Tutte polynomial 
 is constructed by a 
 $\mathbb{Z}$-\textit{bigraded module}.
A $\mathbb{Z}$-\textit{bigraded module} 
is a module $\mathcal{A}$ 
consisting of a decomposition 
$\mathcal{A}=\bigoplus_{(i,j)\in \mathbb{Z}\oplus \mathbb{Z} } A_{i,j}. $ 
The graded dimension of $\mathcal{A}$ is defined as a
two-variable power series 
$q\mathrm{dim} \mathcal{A}:= \sum_{i,j} \dim_{\mathbb{Q}}(A_i \otimes \mathbb{Q}).$

Let $A$ and $B$ be polynomial rings and
 $A=\mathbb{Z}[x]/(x^2)$,
 $B=\mathbb{Z}[y]/(y^2)$,
where deg $x=(1,0)$, 
deg $y=(0,1)$.
The \textit{degree} 
of a polynomial 
is the largest natural number 
such that
the coefficient is not zero.
A simple calculation implies that
$q\mathrm{dim}A= 1+x$,
$q\mathrm{dim}B=1+y$,
and
$q\mathrm{dim}(A^{\otimes m} \otimes B^{\otimes n})=(1+x)^m (1+y)^n,$
for any $n,m\in \mathbb{N}$.

Let $G(s)= (V,E)$ be a labelled graph, 
and $f: 2^{\Omega} \to \mathbb{R}$
be a discrete function of degree~$d$.
Similar to the construction in the last subsection, 
we represent any subgraph $G'$ of $G$ whose vertices are 
$V$ by the edge vector.
We define the cochain group 
as following.

\begin{df}
The $q$-th \textit{Tutte chain group} of $G(s)$ is 
defined as
$$TC^{q} (G(s)):= \bigoplus_{l(\epsilon)=q}  A^{\otimes k(G_\epsilon)} \otimes 
B^{\otimes \beta_1(G_\epsilon)},  $$
where 
$\beta_1(G_\epsilon)$ denotes the one-dimensional Betti number of graph
$(G_\epsilon)$.

\end{df}

Next, we define a Tutte differential operator 
$td_q: TC^q(G) \to TC^{q+1}(G)$.
We begin with a 
map from $\mathcal{E}^a(G)$ to $\mathcal{E}^{a+1}(G)$
that is identity on all elements but 
change exact one $0$ to $1$.
That is, the map adds an edge into some subset $D\subset E$.
Known as a basic fact of 
algebraic topology, 
adding an edge $\tilde{e}$ to  $D\subset E$ 
leads to only two cases,
those are,
\begin{enumerate}
\item  $\tilde{e}$ joins two connected components of 
$G_D$;

\item $\tilde{e}$ forms a cycle in 
$G_D$.
\end{enumerate}
In the first case, 
$k(G_{D\cup  \tilde{e}})= k(G_D) -1 $,
and 
$\beta_1(G_{D\cup  \tilde{e}})= \beta_1(G_D).$
In the second case,
$k(G_{D\cup  \tilde{e}})= k(G_D) $,
and 
$\beta_1(G_{D\cup  \tilde{e}})= \beta_1(G_D) +1.$

For the first case, 
we define 
$td_{\epsilon}^A: A^{\epsilon_1}(G) \to A^{\epsilon_2}(G)$
to be the identity on all tensor factors other than those 
two tensor factors that are induced by two connected components
joined.
For the two joined tensor factors, 
we map $a_1\otimes a_2\in A\otimes A $ to
$a_1 a_2\in A$.
We also define
$td_{\epsilon}^B: B^{\epsilon_1}(G) \to B^{\epsilon_2}(G)$
to be the identity map.

For the second case,
we define
$td_{\epsilon}^A: A^{\epsilon_1}(G) \to A^{\epsilon_2}(G)$
be the identity map,
and
$td_{\epsilon}^B: B^{\epsilon_1}(G) \to B^{\epsilon_2}(G)$
be the 
module homomorphism mapping $b\in B^{\epsilon_1}(G) \mapsto b\otimes 1\in B^{\epsilon_2}(G)=B^{\epsilon_1}(G) \otimes B.$

Now by combining $td_{\epsilon}^A$
and $td_{\epsilon}^B$ in two different cases 
we can define the differential operator. 
More precisely,
\begin{df}
The differential operator 
$$td^{q}:TC^{q} (G(s)) \to TC^{q+1}(G(s))$$
is defined as 
$td^{q}= \sum_{l(\epsilon)=q} (-1)^{n(\epsilon)} ( td_{\epsilon}^A \otimes td_{\epsilon}^B)$,
where
$ n(\epsilon)=\sum_{i=1}^{k_0} \epsilon_i $,
and $k_0$ is the index of the new-added edge in 
the given order.

\end{df}

\begin{lem}
The following statements are true.
\begin{enumerate}
\item $td\circ td=0.$
\item $td$ is degree preserving.
\end{enumerate}

\end{lem}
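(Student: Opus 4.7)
The plan is to mimic the proof of Lemma \ref{lemma7d} for the chromatic complex, but now tracking the two gradings coming from $A=\mathbb{Z}[x]/(x^2)$ and $B=\mathbb{Z}[y]/(y^2)$ simultaneously. For part (2), I would unpack what ``degree'' means: $TC^{q}(G(s))$ is bigraded by the total $x$-degree and the total $y$-degree, and I need to verify that each summand $td_{\epsilon}^A \otimes td_{\epsilon}^B$ of $td^{q}$ preserves this bigrading. In Case~1 (adding $\tilde e$ joins two connected components), $td_{\epsilon}^A$ is multiplication $A\otimes A\to A$ on the two merging tensor factors and identity elsewhere; this sends $1\otimes 1\mapsto 1$, $1\otimes x\mapsto x$, $x\otimes 1\mapsto x$, $x\otimes x\mapsto 0$, which is obviously $(1,0)$-grading preserving, while $td_{\epsilon}^B$ is the identity. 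In Case~2 (adding $\tilde e$ creates a cycle), $td_{\epsilon}^A$ is the identity and $td_{\epsilon}^B\colon b\mapsto b\otimes 1$ is grading preserving since $1\in B$ has bidegree $(0,0)$. Hence $td^{q}$ is a degree preserving map.

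For part (1), I would use the standard cube argument. Fix $\epsilon$ with $l(\epsilon)=q$ and two edges $e_i,e_j$ with $\epsilon_i=\epsilon_j=0$. The composition $td^{q+1}\circ td^{q}$ restricted to the summand indexed by $\epsilon$ produces contributions to every $\epsilon'$ obtained by turning two $0$s into $1$s; the contribution at $\epsilon'=\epsilon\cup\{e_i,e_j\}$ consists of two terms, one for the order $(e_i,e_j)$ and one for $(e_j,e_i)$. The sign convention $(-1)^{n(\epsilon)}$ is exactly the one used for the Khovanov-style hypercube, so standard bookkeeping shows that the two signs are opposite. It therefore suffices to prove that the two compositions of elementary maps agree as module homomorphisms. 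This reduces to a four case check according to whether adding each edge is of type~1 or type~2.

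The main obstacle is this case analysis, and in particular the mixed cases. When both edges are of type~1, commutativity reduces to associativity and commutativity of multiplication in $A$ on the three merging tensor factors. When both are of type~2, the two operations each tensor an extra copy of $B$ (sending $b\mapsto b\otimes 1\otimes 1$ in either order), which manifestly commute. In the mixed case the type~1 map acts on the $A$-factors and the type~2 map acts on the $B$-factors, so they act on disjoint tensor slots and commute by construction. The only delicate point is to verify that the combinatorics of connected components and one dimensional Betti numbers is not disturbed by the order of addition: if both $e_i$ and $e_j$ are type~2 with respect to $G_{\epsilon}$, then each remains type~2 after the other is added; if $e_i$ is type~1 and $e_j$ is type~2, both retain their types; and if both are type~1 joining three distinct components pairwise, both remain type~1. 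The only subtle subcase is when $e_i$ and $e_j$ both join the \emph{same} pair of components of $G_{\epsilon}$, in which case adding one makes the other of type~2; here one must check directly that the multiplication map $A\otimes A\to A$ followed by $b\mapsto b\otimes 1$ equals the other composition, and this reduces to the identity $m(a_1\otimes a_2)\otimes 1=m(a_1\otimes a_2)\otimes 1$, which is trivial.

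Once these local checks are in hand, summing over all pairs $(e_i,e_j)$ with the sign $(-1)^{n(\epsilon)+n(\epsilon\cup\{e_i\})}$ produces cancellation in pairs, yielding $td\circ td=0$. I would essentially refer to \cite{categorification2} for the sign bookkeeping, since the authors explicitly state that such proofs are omitted, and only write out the local maps as above to make the present cochain complex self contained.
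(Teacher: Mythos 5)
Your argument is correct, and it is essentially the argument the paper intends: the paper itself offers no proof of this lemma beyond the remark that ``the proof of $td\circ td=0$ can be found in \cite{categorification2}'' (and gives nothing at all for degree preservation), so your write-up simply supplies the standard hypercube argument from that reference. Your case analysis for the commuting faces is complete --- in particular you correctly isolate the one genuinely order-sensitive subcase, where $e_i$ and $e_j$ join the same pair of components so that whichever is added second changes from type~1 to type~2, and you verify that both composites equal $m\otimes(b\mapsto b\otimes 1)$ --- and your sign bookkeeping with $(-1)^{n(\epsilon)}$ and the bidegree check for part (2) are both sound.
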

The proof of $td\circ td=0$
can be found in \cite{categorification2}.
Since  $td\circ td=0$,
$\{TC^{\bullet}(G), td^{\bullet} \}$
is a cochain complex.
We denote by 
$TH^{\bullet}(G)$
for its cohomology.
It is shown in \cite{categorification2}
that, 
the  cochain complex does not 
depend on the choice of the order $s$.
The following 
theorem implies that the
constructed cochain 
complex combined with a 
harmonic function is 
a weighted categorification to 
harmonic Tutte polynomial.

\begin{thm}
\label{Thm:TutteCate}
Let  $G=(V,E)$ be a graph with an order $s$ and 
$f$ a discrete
harmonic function on $E$, 
then
$$T_{f}(M_G(s);x,y) = \sum_i (-1)^{i+1} \cdot fq\mathrm{dim}(C^i(G(s))).$$
Also,
$$\widehat {T}_{f}(M_G;x,y)=\sum_s \sum_i (-1)^{i+1} \cdot fq\mathrm{dim}(C^i(G(s))).$$
\end{thm}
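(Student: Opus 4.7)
The plan is to follow the blueprint of Theorem~\ref{Thm:ChromaticCate}, now with the Tutte cochain complex $(TC^{\bullet}(G(s)), td^{\bullet})$ and the bigraded building blocks $A, B$ in place of $\mathcal{M}$.

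First, I would invoke part~(2) of the lemma preceding the theorem: the differential $td$ is bidegree preserving. Combined with the Euler--Poincar\'e principle for bigraded cochain complexes of finitely generated free $\mathbb{Z}$-modules, this lets me replace the weighted graded Euler characteristic of the cohomology $TH^{\bullet}$ by the corresponding alternating sum over the cochain groups $TC^{\bullet}$, so it suffices to work at the cochain level.

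Second, I would decompose $TC^q(G(s)) = \bigoplus_{l(\epsilon) = q} A^{\otimes k(G_\epsilon)} \otimes B^{\otimes \beta_1(G_\epsilon)}$ and assign to each summand the natural $f$-value $\widetilde{f}(s(A_\epsilon))$, where $A_\epsilon \subset E$ is the edge set represented by $\epsilon$ (this is the convention implicit in the chromatic case worked out in Example~7.1). Using $q\mathrm{dim}(A^{\otimes m} \otimes B^{\otimes n}) = (1+x)^m(1+y)^n$, I obtain
\begin{equation*}
\sum_i (-1)^{i+1} fq\mathrm{dim}(TC^i(G(s))) = \sum_{A \subset E}(-1)^{|A|+1}\widetilde{f}(s(A))(1+x)^{k(G_A)}(1+y)^{\beta_1(G_A)}.
\end{equation*}

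Third, I would identify the right-hand side with $T_f(M_G(s); x, y)$ via the graphic-matroid dictionary $\rho(J) = |V| - k(G_J)$, which yields $\rho(E) - \rho(A) = k(G_A) - k(G)$ and $|A| - \rho(A) = \beta_1(G_A)$. The harmonicity hypothesis enters through Remark~\ref{Rem:BachocLem}: the complementation identity $\widetilde{f}(s(A)) = (-1)^d \widetilde{f}(s(E \setminus A))$ and the vanishing of $\widetilde{f}$ on subsets of extreme cardinality produce cancellations which reconcile the $(1+x), (1+y)$ factors of the cochain side with the $(x-1), (y-1)$ factors appearing in the definition of $T_f$. The second identity of the theorem then follows by summing the first over all labels $s \in S(G)$.

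The principal obstacle is the sign-and-substitution bookkeeping in the third step. One must simultaneously track three parities---the $(-1)^{|A|}$ from the alternating Euler sum, the $(-1)^d$ contributed by Remark~\ref{Rem:BachocLem}, and the parity $(-1)^{|V|}$ arising from the identity $k(G_A) + \beta_1(G_A) \equiv |A| + |V| \pmod 2$---together with the overall shift by $(1+x)^{k(G)}$ needed to pass from $(1+x)^{k(G_A)}$ to $(x-1)^{k(G_A) - k(G)}$. Once this alignment is made precise, the theorem reduces to a direct term-by-term comparison; alternatively, one could prove it by induction on $|E|$ using the deletion-contraction recursion of Proposition~\ref{Prop:TutteInduc} to propagate the identity from trivial base cases.
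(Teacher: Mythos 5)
Your Steps 1 and 2 are sound: the differential is bidegree preserving, so the weighted graded Euler characteristic of the cohomology equals the alternating sum over the cochain groups, and that sum is exactly
\[
\sum_i(-1)^{i+1}\,fq\mathrm{dim}\bigl(TC^i(G(s))\bigr)
=\sum_{A\subset E}(-1)^{|A|+1}\,\widetilde{f}(s(A))\,(1+x)^{k(G_A)}(1+y)^{\beta_1(G_A)}.
\]
The gap is your Step 3: the ``sign-and-substitution bookkeeping'' you defer cannot be completed, because the identity you need is false. Harmonicity only kills the strata with $|A|<d$ or $|A|>n-d$ and relates $\widetilde{f}(s(A))$ to $\widetilde{f}(s(E\setminus A))$; it does nothing to convert the factors $(1+x)^{k(G_A)}(1+y)^{\beta_1(G_A)}$ into $(x-1)^{\rho(E)-\rho(A)}(y-1)^{|A|-\rho(A)}$. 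Concretely, for the paper's running example (Figure~\ref{Fig:G44} with the label of Example~\ref{Ex:WeightedChrom}, so the bridge carries $a_1$ and $a_1+a_2+a_3+a_4=0$) the displayed sum evaluates to $-a_1(1+x)^2(1+y)+a_1(1+x)$, whereas Example~\ref{Ex:WeightedTutte} gives $T_f(M_G(s);x,y)=-a_1(x-1)(y-1)+a_1$; these already disagree at $x=y=1$ (giving $-6a_1$ versus $a_1$).

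What is true---and requires no harmonicity at all, only $k(G_A)-\beta_1(G_A)=|V|-|A|$ and $\rho(A)=|V|-k(G_A)$---is
\[
\sum_{A\subset E}(-1)^{|A|+1}\,\widetilde{f}(s(A))\,(1+x)^{k(G_A)}(1+y)^{\beta_1(G_A)}
=(-1)^{|V|+1}\bigl(-(1+x)\bigr)^{k(G)}\,T_f\bigl(M_G(s);-x,-y\bigr),
\]
so the complex categorifies $T_f$ only up to the substitution $(x,y)\mapsto(-x,-y)$ and a prefactor, exactly as in the unweighted theorem of Helme-Guizon and Rong. A correct argument would establish this renormalized identity, for which your term-by-term comparison does go through; the plain equality in the statement cannot be proved because it does not hold. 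The same defect sinks your fallback plan of induction via Proposition~\ref{Prop:TutteInduc}: the recursion would be propagating an identity that already fails on four edges. (The paper supplies no proof of this theorem, so there is no argument of the authors to weigh your route against.)
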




\section*{Statements and Declarations}

\subsection*{Funding}

The second author is supported by JSPS KAKENHI (18K03217).

\subsection*{Competing Interests}

The authors have no affiliation with any organization with a direct or indirect financial interest in the subject matter discussed in the manuscript

\subsection*{Author Contributions}

All authors have participated in (a) conception and design, or analysis and interpretation of the data; (b) drafting the article or revising it critically for important intellectual content; and (c) approval of the final version.



\section*{Data availability statement}

The data that support the findings of this study are available from the corresponding author.

\end{document}